\def\R{{\mathbb R}}
\def\N{{\mathbb N}}
\def\Lip{\mathcal Lip}
\def\ds{\displaystyle}
\newcommand{\norm}[1]{\left\Vert#1\right\Vert}
\newtheorem{theorem}{Theorem}[section]
\newtheorem{definition}[theorem]{Definition}
\newtheorem{corollary}[theorem]{Corollary}
\newtheorem{lemma}[theorem]{Lemma}
\newtheorem{remark}[theorem]{Remark}
\numberwithin{equation}{section}
\title{Dissipative boundary conditions for $2\times 2$ hyperbolic systems of conservation laws for entropy solutions in BV\thanks{This work was
supported by the ERC advanced grant 266907 (CPDENL) of the 7th Research Framework
Programme (FP7). The authors acknowledge the support of the UMI IFCAM (Indo-French Center for Applied Mathematics).}}
\author{Jean-Michel Coron\thanks{Sorbonne Universit\'{e}s, UPMC Univ Paris 06, Laboratoire
Jacques-Louis Lions, CNRS UMR 7598,  4 place Jussieu, 75252 Paris cedex 05, France. Email: \tt{coron@ann.jussieu.fr}.}
\and
Sylvain Ervedoza\thanks{Universit\'{e} Paul Sabatier, Institut de Math\'{e}matiques de Toulouse, CNRS UMR 5219, 118 route de Narbonne
31062 Toulouse Cedex 9, France. Email: \tt{sylvain.ervedoza@math.univ-toulouse.fr}.}
\and
Shyam Sundar Ghoshal\thanks{Gran Sasso Science Institute (GSSI), viale Francesco Crispi 7, 67100 L'Aquila, Italy. Email: \tt{shyam.ghoshal@gssi.infn.it}.}
\and
Olivier Glass\thanks{Universit\'{e} Paris-Dauphine, Ceremade, CNRS UMR 7534, Place du Mar\'{e}chal de Lattre de Tassigny, 75775 Paris Cedex
16, France. Email: \tt{glass@ceremade.dauphine.fr}.}
\and
Vincent Perrollaz\thanks{Universit\'{e} Fran\c{c}ois Rabelais, Laboratoire de Math\'{e}matiques et Physique Th\'{e}orique, CNRS UMR 7350, Parc de Grandmont,
37200 Tours, France. Email: \tt{vincent.perrollaz@lmpt.univ-tours.fr}.}}
\date{}
\begin{document}

\maketitle

\begin{abstract}
	In this article, we investigate the BV stability of $2 \times 2$ hyperbolic systems of conservation laws with strictly positive velocities under dissipative boundary conditions. More precisely, we derive sufficient conditions guaranteeing the exponential stability of the system under consideration for entropy solutions in BV.  Our proof is based on a front tracking algorithm used to construct approximate piecewise constants solutions whose BV norms are controlled through a Lyapunov functional. This Lyapunov functional is inspired by the one proposed in J. Glimm's seminal work  \cite{Glimm65}, modified with some suitable weights in the spirit of the previous works \cite{Coron-Andrea-Bastin-IEEE-2007,Coron-Bastin-Andrea-SICON-2008}.
\end{abstract}

\tableofcontents
%
%
%
%
%
%
%
%
\section{Introduction}
%
%
%
%
\subsection{Setting and main result}
The goal of this article is to study the exponential stability of $2\times 2$ systems
of conservation laws on a finite interval, by means of boundary feedbacks, in the context of weak entropy solutions. To be more precise, we consider the following setting: Let $\Omega$ be an open subset of $ \mathbb{R}^2$ with $0 \in \Omega$, and $f:\Omega\mapsto \R^2$ be a smooth function (supposed to satisfy the strict hyperbolicity condition described below) and consider the following system of two conservation laws
\begin{equation}
	\label{System-U}
		 \partial_t u+ \partial_x (f(u)) = 0  \quad \mbox{for}\ (t,x) \in (0, \infty)\times (0,L).
\end{equation}
In System \eqref{System-U}, the solution $u= u(t,x) = (u_1, u_2)^T$ has $2$ components and the space variable $x$ belongs to the finite interval $(0,L)$. 
We assume the flux function $f$ to satisfy the strictly hyperbolicity conditions, that is,
\begin{equation} \label{StrHyp}
\forall u \in \Omega, \ \text{ the matrix } A(u)=Df(u) \text{ has 2 real distinct eigenvalues } \lambda_1(u)<\lambda_2(u).
 \end{equation}
Furthermore, we make the assumption that both velocities are positive, so that we finally get:
\begin{equation} \label{PositiveVel}
 0 < \lambda_1(u) < \lambda_2(u)\qquad \text{for all}\ 	 u \in \Omega.
\end{equation}
Note that the case of two strictly negative velocities is obviously equivalent by the change of variable $x \to L-x$.
\\
System \eqref{System-U} is completed with boundary conditions of the form
\begin{equation} \label{BoundaryConditions}
u(t,0) = K u(t,L), 	
\end{equation}
where  $K$ is a $2\times 2$ (real) matrix. Here, for sake of simplicity, we assume that the boundary condition is linear but other nonlinear forms could be considered.
\par
\ \par
Clearly $u \equiv 0 $  is an equilibrium of \eqref{System-U}-\eqref{BoundaryConditions}. We are interested in the exponential stability
of this equilibrium in the $BV$ space for entropy solutions. We recall that the space $BV$ is natural for solutions of nonlinear hyperbolic systems of conservation laws, and is in particular the space considered in the celebrated paper by J. Glimm \cite{Glimm65}. \par
\ \par
In order to discuss the condition that we impose on $K$, we further introduce the left and right eigenvectors of $A(u) = Df(u)$: for each $k=1,2$, we define $r_k(u)$ as a right eigenvector of $A(u)$ corresponding to the eigenvalue $\lambda_k(u)$:
\begin{equation}
	\label{Right-Eigenvector}
	A(u) r_k(u)=\lambda_k(u) r_k(u), \,r_k(u)\not =0, \qquad \ k=1,2,\ u\in \Omega.
\end{equation}
We also introduce correspondingly left eigenvectors $\ell_k(u)$ of $A(u)$
\begin{equation} \label{Left-Eigenvector}
	\ell_k(u) A(u) = \lambda_k(u) \ell_k(u), \quad \hbox{ with }
	\ell_k(u) \cdot r_{k'} (u) =
			\left\{ \begin{array}{l}	
			1 \hbox{ if } k = k',
				\\
				0 \hbox{ if } k \neq k'.
			\end{array}\right.
\end{equation}
We further impose that the hyperbolic system \eqref{System-U} is genuinely non-linear in the sense of Lax \cite{Lax}, i.e.
$$
	 D\lambda_k(u)\cdot r_k(u) \neq 0 \ \mbox{for all} \ u\in\Omega.	
$$
Changing the sign of $r_k(u)$ and $\ell_k(u)$ if necessary, we can therefore assume
\begin{equation} \label{Genuine-Non-Linear}
D \lambda_k(u)\cdot r_k(u) > 0 \ \mbox{for all} \ u\in\Omega.		
\end{equation}
\par
As the total variation $\mbox{TV}_{[0,L]}$ is only a semi-norm on $BV(0,L)$ (it vanishes for constant maps), it
is convenient to define the following norm on $BV(0,L)$ as
\begin{equation}
\label{BV-normdef}
|u|_{BV} :=  \mbox{TV}_{[0,L]}( u) +\int_0^L|u(x)|dx,\qquad u\in BV(0,L).
\end{equation}
It is useful to recall that a function $u \in BV(0,L)$ has at most countably many discontinuities and have at each point left and right limits. In particular one can define $u(0^{+})$ and $u(L^{-})$ without ambiguity. \par

Now we recall that entropy solutions are weak solutions of \eqref{System-U} in the sense of distributions, which satisfy moreover {\it entropy conditions} for the sake of uniqueness. A way to express these entropy conditions consists in introducing entropy/entropy flux couples for \eqref{System-U} as any couple of regular functions $(\eta,q) : \Omega \rightarrow \R$ satisfying:
\begin{equation}
\label{Def:CoupleEntropie}
\forall U \in \Omega, \ \ D\eta(U) \cdot Df(U) = Dq(U).
\end{equation}
Of course $(\eta,q)=(\pm \mbox{Id}, \pm f)$ are entropy/entropy flux couples. Then we have the following definition (see \cite{Bressan-Book-2000,Dafermos,Lax}):
\begin{definition} \label{Def:SolutionEntropie}
A function $u \in L^\infty(0,T;BV(0,L)) \cap \Lip(0,T ;L^1(0,L))$ is called an {\it entropy solution} of \eqref{System-U} when,
for any entropy/entropy flux couple $(\eta,q)$, with $\eta$ convex, one has in the sense of measures
\begin{equation} \label{Def:SolEntrop1}
\eta(u)_t + q(u)_x \leq 0,
\end{equation}
that is, for all $\varphi \in {\mathcal D}((0,T) \times (0,L))$ with $\varphi \geq 0$, 
\begin{equation} \label{InegEntropie}
\int_{(0,T) \times (0,L)} \big( \eta(u(t,x)) \varphi_t(t,x) + q(u(t,x)) \varphi_x(t,x) \big) \, dx \, dt \geq 0.
\end{equation}
\end{definition}
These entropy conditions can be justified for instance by vanishing viscosity and are automatically satisfied by classical (by which we mean of class $C^{1}$) solutions. \par
\ \par
Our main result is the following one:
\begin{theorem} \label{Thm-Main}
Let the system \eqref{System-U} be strictly hyperbolic and genuinely nonlinear in the sense of \eqref{Genuine-Non-Linear}, and assume that the velocities are positive in the sense of \eqref{PositiveVel}.
	
If the matrix $K$ satisfies
\begin{multline}
\label{Assumption-On-K-new}
\inf_{\alpha \in (0,+\infty)} \left( \max \big\{|\ell_1(0) \cdot K r_1(0) | + \alpha |\ell_2(0) \cdot K r_1(0)  |, \right.  \\
\left.  \alpha^{-1}|\ell_1(0)\cdot K r_2(0) | +|\ell_2(0) \cdot K r_2(0) |\big\}\right)  	< 1,
\end{multline}
then there exist positive constants $C$, $\nu$, $\varepsilon_0 >0$, such that for every $u_0 \in BV(0,L)$ satisfying
\begin{equation} \label{Assumption-On-U-0}
|u_0|_{BV} \leq \varepsilon_0,
\end{equation}
there exists an entropy solution $u$ of \eqref{System-U} in $L^\infty(0,\infty; BV(0,L))$
satisfying $u(0,\cdot)=u_0(\cdot)$ and \eqref{BoundaryConditions} for almost all times,  such that
\begin{equation} \label{Exp-Stabilization-Estimate}
|u(t)|_{BV} \leq C \exp(-\nu t) |u_0|_{BV}, \qquad t \geq 0 .
\end{equation}
\end{theorem}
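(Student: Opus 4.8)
The plan is to follow the wave-front-tracking strategy of Glimm and Bressan \cite{Glimm65,Bressan-Book-2000}, adapted to the boundary feedback, and to control the approximate solutions by a weighted Glimm functional. For each mesh parameter $\varepsilon>0$ I would build a piecewise constant approximate solution $u^\varepsilon$: approximate $u_0$ by a piecewise constant function, solve the arising Riemann problems with an approximate Riemann solver (splitting rarefactions into small non-entropic jumps), and let the resulting fronts of the two families propagate to the right, which is globally possible since \eqref{PositiveVel} forces every front to have strictly positive speed. The novelty with respect to the Cauchy problem lies at the ends of $(0,L)$: a front reaching $x=L$ exits the domain but modifies the right trace $u^\varepsilon(t,L^-)$, and through \eqref{BoundaryConditions} this modifies the prescribed left value $u^\varepsilon(t,0^+)=Ku^\varepsilon(t,L^-)$, which emits two new fronts (one of each family) at $x=0$. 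I would first check that the algorithm is well defined for all times and that the number of fronts and interaction points stays locally finite; here positivity of the velocities is again convenient, since no front ever travels backwards and the only mechanism creating fronts is the boundary emission, whose total strength will be shown summable through the Lyapunov estimate below.

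Second, I introduce the weighted functional
$$ V(t) = \sum_{\alpha} W_{k_\alpha}\!\big(x_\alpha(t)\big)\,|\sigma_\alpha| \;+\; \kappa\, Q(t), \qquad Q(t)=\!\!\sum_{(\alpha,\beta)\ \text{approaching}}\!\!|\sigma_\alpha|\,|\sigma_\beta|, $$
where $\sigma_\alpha$ is the strength of front $\alpha$, $k_\alpha\in\{1,2\}$ its family, $x_\alpha(t)$ its position, $Q$ the Glimm interaction potential, $\kappa>0$ a large constant, and $W_1,W_2:[0,L]\to(0,\infty)$ are exponential weights $W_k(x)=w_k\,\theta_k^{\,x/L}$ with $w_k>0$ and $0<\theta_k<1$ to be chosen. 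For small enough total variation, $V$ is equivalent to $|u^\varepsilon(t)|_{BV}$, so it suffices to propagate a decay estimate on $V$. The choice of weights is dictated by \eqref{Assumption-On-K-new}: writing $\alpha=w_2/w_1$, the linearized reflection at $x=0$ produced by an exiting family-$k$ front of strength $\sigma$ emits fronts of strengths $\big(\ell_1(0)\cdot Kr_k(0)\big)\sigma$ and $\big(\ell_2(0)\cdot Kr_k(0)\big)\sigma$ up to $O(|\sigma|^2)$ corrections, so the weighted strength is not increased by the reflection as soon as
$$ |\ell_1(0)\cdot Kr_1(0)|+\alpha\,|\ell_2(0)\cdot Kr_1(0)|\le \theta_1, \qquad \alpha^{-1}|\ell_1(0)\cdot Kr_2(0)|+|\ell_2(0)\cdot Kr_2(0)|\le\theta_2. $$
Since the left-hand sides are exactly the two quantities in \eqref{Assumption-On-K-new}, the hypothesis lets me first fix $\alpha$ making their maximum strictly less than $1$, and then fix $\theta_1,\theta_2\in(0,1)$ above these values but still strictly below $1$; the strict gap leaves room to absorb the quadratic reflection errors once $\varepsilon_0$ is small.

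Third, I would estimate the evolution of $V$. Between interaction and reflection times only transport occurs and $Q$ is constant, so, since $\dot x_\alpha=\lambda_{k_\alpha}+O(\varepsilon)>0$ while $W_k'<0$, one gets $\frac{d}{dt}V=\sum_\alpha \dot x_\alpha\,W_{k_\alpha}'(x_\alpha)|\sigma_\alpha|\le -\nu_0\sum_\alpha W_{k_\alpha}|\sigma_\alpha|$, hence $\frac{d}{dt}V\le-\nu V$ after absorbing the quadratically small term $\kappa Q=O(\varepsilon_0)\,V$. At an interior interaction the standard Glimm--Bressan estimate gives outgoing strengths equal to the incoming ones up to $O(|\sigma'\sigma''|)$ and $\Delta Q\le -|\sigma'\sigma''|+O(\varepsilon_0)|\sigma'\sigma''|$; as the weights are bounded with bounded variation, the weighted linear part increases by at most $C|\sigma'\sigma''|$, which $\kappa\Delta Q$ dominates once $\kappa$ is large, so $\Delta V\le 0$. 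At a boundary reflection the computation of the previous paragraph gives $\Delta V\le 0$. Consequently $t\mapsto V(t)$ decays continuously at rate $\nu$ and only jumps downwards, so $V(t)\le e^{-\nu t}V(0)$ and therefore $|u^\varepsilon(t)|_{BV}\le C e^{-\nu t}|u_0|_{BV}$ with $C,\nu$ independent of $\varepsilon$, provided $\varepsilon_0$ is small enough that all fronts remain in a neighbourhood of $0$ where the eigenstructure is close to that at $u=0$.

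Finally, I would pass to the limit $\varepsilon\to0$: the uniform bound on $|u^\varepsilon(t)|_{BV}$ together with the uniform Lipschitz-in-time bound on the $L^1$ norm allows, via Helly's theorem and a diagonal argument, to extract a subsequence converging in $L^1_{loc}$ to an entropy solution $u\in L^\infty(0,\infty;BV(0,L))$ of \eqref{System-U} with $u(0,\cdot)=u_0$, which inherits \eqref{Exp-Stabilization-Estimate} by lower semicontinuity of the $BV$ norm; the boundary condition \eqref{BoundaryConditions} passes to the limit through convergence of the traces $u^\varepsilon(\cdot,0^+),u^\varepsilon(\cdot,L^-)$ and continuity of $K$. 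I expect the main obstacle to be the rigorous treatment of the boundary reflection, namely proving that the exact map from an exiting front's strength to the two emitted strengths agrees with the linear map $\sigma\mapsto(\ell_j(0)\cdot Kr_k(0))\sigma$ up to genuinely higher-order terms, uniformly over the fronts present and including the coupling when several fronts reach $x=L$ in quick succession, and that these errors together with the interaction errors are controlled by the strict margin in \eqref{Assumption-On-K-new}; the global well-posedness of a front-tracking scheme whose boundary continually re-injects fronts is the other delicate point, and it is precisely the decay of $V$ that rules out an accumulation of fronts.
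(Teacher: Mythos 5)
Your proposal is correct and follows essentially the same route as the paper: a front-tracking construction combined with a Glimm functional carrying exponential spatial weights tuned so that the boundary reflection is strictly contractive under \eqref{Assumption-On-K-new}, followed by Helly compactness. The only cosmetic difference is that you keep two separate weights $w_1,w_2$ with ratio $\alpha$ and an unweighted interaction potential, whereas the paper normalizes the eigenvectors to reduce to $\alpha=1$ and puts the exponential weights inside $Q$ as well; the points you flag as delicate (uniformity of the boundary reflection estimate, non-accumulation of fronts, and convergence of the traces, which the paper handles via a sideways Glimm estimate in $L^\infty_x(BV_t)$) are exactly the ones the paper devotes its auxiliary lemmas to.
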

In other words, Theorem \ref{Thm-Main} states that \eqref{Assumption-On-K-new} is a sufficient condition for the (local) exponential stability of $u\equiv 0 $ with respect to the $BV$-norm for \eqref{System-U} with the boundary condition \eqref{BoundaryConditions}.
\begin{remark}
In Theorem~\ref{Thm-Main}, there is no claim of uniqueness of the solution of \eqref{System-U} with initial condition $u(0,\cdot)=u_0(\cdot)$ and boundary law \eqref{BoundaryConditions}. The initial-boundary value problem for hyperbolic systems of conservation laws is a very delicate matter, even in the non-characteristic case (when no characteristic speed vanishes). We refer for instance to Amadori \cite{Amadori}, Amadori-Colombo \cite{Amadori-Colombo-1997,Amadori-Colombo-1998}, Colombo-Guerra \cite{Colombo-Guerra-2010}, Donadello-Marson \cite{Donadello-Marson-2007} and references therein. In particular, in the open loop case, it is possible to construct a {\it Standard Riemann Semigroup} as a limit of front-tracking approximations (which is also the construction method employed here). However, up to our knowledge, these results do not quite cover our feedback boundary condition \eqref{BoundaryConditions}, and no result of uniqueness in the same spirit as \cite[Theorem 9.4]{Bressan-Book-2000} is available in our situation.
\end{remark}
Our approach will be based on the construction of a Lyapunov functional for solutions $u$ of \eqref{System-U}-\eqref{BoundaryConditions},
in the spirit of earlier works by B. d'Andr\'ea-Novel,
G. Bastin and J.-M. Coron \cite{Coron-Andrea-Bastin-IEEE-2007,Coron-Bastin-Andrea-SICON-2008}.
The main difference with these works is that they consider Lyapunov functionals estimating
the $H^2$ norm of the solutions, which are classical solutions and therefore forbids the presence of shocks.
More recently in \cite{Coron-Bastin-2014}, a Lyapunov approach was
also discussed to derive local stabilization results in $C^1$. But we will rather design a Lyapunov functional that measures the $BV$-norm of the solution.

Our Lyapunov functional is actually inspired by the classical functional introduced
by J. Glimm in \cite{Glimm65}, which measures the strength of interaction and a quadratic
quantity measuring the ``potential for future interactions''. In this celebrated paper, this functional
was devoted to prove the existence of global solutions of hyperbolic systems in a small $BV$-neighborhood of a constant map.
While the construction in \cite{Glimm65} is applied to an approximating sequence generated by means of the so-called random choice method, we will follow the method of wave-front-tracking, originally introduced by C. Dafermos \cite{Dafermos:FT} in the context of scalar conservation laws, and extended in the context of $2 \times 2$ hyperbolic systems of conservation laws (as considered in this paper) by R.~DiPerna in \cite{DiPerna-1976}. This method is at the core of the book \cite{Bressan-Book-2000} by A. Bressan to which we will refer several times in the following.
%
%
%
%
%
%
%
%
\subsection{Previous results}
{\bf Classical setting.}
We first recall some previous results on the exponential stability of $n\times n$ hyperbolic systems
on the finite interval $(0,L)$ in the ``classical case'', when no entropy conditions are needed. \par
For simplicity of this presentation, we assume that all the characteristic speeds are positive.
Our dynamical system takes the form
\begin{equation} \label{gen-hyp}
\left\{ \begin{array}{ll}
u_t+A(u)u_x=0& \quad \mbox{for}\ (t,x) \in (0, \infty)\times (0,L), \\
u(t,0)=G(u(t,L))& \quad \mbox{for}\ t \in (0,+\infty),
\end{array} \right.
\end{equation}
where $A:\Omega\to \R^{n\times n}$ and $G:\Omega\to \R^n$ are smooth maps such that
\begin{gather}
\label{property-A}
\text{for every $u\in \Omega$, the eigenvalues of $A(u)$ are real, positive and distinct,} \\
\label{property-G}
G(0)=0.
\end{gather}
It follows from \eqref{property-G} that $u\equiv0$ is an equilibrium of our dynamical system \eqref{gen-hyp}.
Recall that one says that this equilibrium
is exponentially stable for \eqref{gen-hyp} with respect to the norm $|\cdot|_X$ on a linear space $X$ of
functions from $[0,L]$ into $\R^n$ if there exist three positive constants $C$, $\nu$ and  $\varepsilon_0 >0$ such that,  for every  initial data $u_0 \in X$ satisfying the compatibility conditions adapted to $X$ and such that
\begin{equation}\label{Assumption-On-U-0-X}
|u_0|_X\leq \varepsilon_0,
\end{equation}
the solution $u$ of \eqref{gen-hyp} associated to the initial condition $u(0,\cdot)=u_0(\cdot)$ is such that
\begin{equation} \label{Exp-Stabilization-Estimate-X}
|u(t)|_X \leq C \exp(-\nu t) |u_0|_X \qquad \text{for}\ t \geq 0 .
\end{equation}
Again to simplify the presentation, we moreover assume that
\begin{equation}\label{Adiag}
  A(0) \text{ is a diagonal matrix.}
\end{equation}
In fact, this can be assumed without loss of generality by performing a linear change of variables on $u$ if necessary.
Let
\begin{equation}\label{defK}
  K:=G'(0)\in \R^{n\times n}.
\end{equation}
%
%
%
\noindent
{\it The linear case.}
The first studies on the exponential stability of $0$ concern naturally the linear case:
\begin{equation} \label{caslinear}
	A(u)=\Lambda =\text{diag}(\lambda_1,\ldots, \lambda_n), \quad G(u)=Ku \qquad \text{for}\ u \in \Omega.
\end{equation}
Hence the dynamical system \eqref{gen-hyp} is now
\begin{equation}\label{sys-lin}
\left\{\begin{array}{ll}
u_t+\Lambda u_x=0& \quad \mbox{for}\ (t,x) \in (0, \infty)\times (0,L), \\
u(t,0)=Ku(t,L) & \quad \mbox{for}\ t\in (0,+\infty).
\end{array} \right.
\end{equation}
Regarding $X$, we can consider various functional settings which give analogous results, due to the fact that we are in the linear regime.
For instance the following classical functional spaces (with the associated usual norms) can be considered:
\begin{enumerate}[(a)]
\item \label{XSobolevmp} the Sobolev spaces $W^{m,p}$, with $m\in \mathbb{N}$ and $p\in [1,+\infty]$, 
\item \label{XCm} $C^m([0,L])$, with  $m \in \mathbb{N}$,
\item \label{XBV} $BV(0,L)$.
\end{enumerate}
Then, whatever is $X$ in the above list, $0$ is exponentially stable for \eqref{sys-lin} for the norm $|\cdot|_X$ if and only if there exists $\delta > 0$ such that
\begin{equation}\label{cond-lineaire-1}
\Big\{ \mbox{det} \big(Id_n - \big(\mbox{diag} (e^{- z/\lambda_1}, \cdots,  e^{-  z/\lambda_n})\big)K \big) = 0 , \,z \in \mathbb{C} \Big\}
\implies \Re(z) \le - \delta.
\end{equation}
See, e.g. (the proof of)  \cite[Theorem 3.5 on page 275]{HaleVerduynLunel-book}. For example, for
\begin{equation}\label{specialvalueAK}
n=2, \,
A=
\begin{pmatrix}
1&0
\\
0&2
\end{pmatrix}, \,
K=K_a:=
\begin{pmatrix}
a&a
\\
a&a
\end{pmatrix}, \ \text{with}\  a\in \R,
\end{equation}
condition \eqref{cond-lineaire-1} is equivalent to
\begin{equation}\label{condition-a-1}
  a\in (-1,1/2).
\end{equation}
See \cite[p. 285]{HaleVerduynLunel-book}. Condition \eqref{cond-lineaire-1} is robust to small perturbations on $K$.
However it turns out that this condition is not robust with respect to small perturbations on the $\lambda_i$. 
See again \cite[p. 285]{HaleVerduynLunel-book}. This lack of robustness is of course a problem if one wants to treat the case of a nonlinear system by looking at its linearization at $0$.
We will say that the exponential stability of $0$ is robust with respect to  small perturbations on the $\lambda_i$'s if there exists $\varepsilon>0$ such that, for
every $(\tilde \lambda_1,\tilde \lambda_2,\cdots,\tilde \lambda _n)\in \R^n$ such that
\begin{equation}\label{Lambda}
  |\tilde \lambda_i- \lambda_i|\leq \varepsilon  \quad \mbox{ for } i =1, \cdots, n,
\end{equation}
$0$ is exponentially stable for the perturbed hyperbolic system
\begin{equation}\label{lin-perturbed}
\left\{\begin{array}{ll}
u_t+\tilde \Lambda u=0& \quad \mbox{for}\ (t,x) \in (0, \infty)\times (0,L), \\
u(t,0)=Ku(t,L) & \quad \mbox{for}\ t\in (0,+\infty),
\end{array} \right.
\end{equation}
where
\begin{equation}\label{deftilelambda}
\tilde \Lambda :=\text{diag}({\tilde \lambda_1,\ldots, \tilde \lambda_n}).
\end{equation}
Then, Silkowski (see e.g. \cite[Theorem 6.1 on page 286]{HaleVerduynLunel-book})
proved that for every $X$ in the list \eqref{XSobolevmp}-\eqref{XCm}-\eqref{XBV}, $0$ is exponentially stable  for \eqref{sys-lin} with an exponential stability which is robust
with respect to small perturbations on the $\lambda_i$'s  if and only if
\begin{equation} \label{condition-0}
\rho_0(K)<1,
\end{equation}
where
\begin{equation} \label{defrho0}
\rho_0(K) :  =
\max \Big\{ \rho \big( \mbox{diag} (e^{i \theta_1}, \cdots, e^{ i \theta_n}) K \big); \theta_i \in \R \Big\}<1.
\end{equation}
We point out that condition \eqref{condition-0}, in contrast with condition \eqref{cond-lineaire-1}, does not depend on the
$\lambda_i$'s. For Example \eqref{specialvalueAK}, condition \eqref{condition-0} is equivalent to
\begin{equation}\label{condition-a-2}
  a\in (-1/2,1/2),
\end{equation}
which is more restrictive than condition \eqref{condition-a-1}. \par
\ \par
\noindent
{\it Nonlinear systems.}
Let us now turn to the nonlinear system \eqref{gen-hyp}. The main previous known results on the exponential stability by means of feedback control concern, as far as we know, only the
classical solutions. We introduce some notations. For $p\in [1,+\infty]$, let
\begin{gather}
\label{def|x|p}
\|x \|_p:=\Big(\sum_{i=1}^n|x_i|^p\Big)^{1/p} \quad  \text{for}\ x:=(x_1,\cdots,x_n)^T
\in \R^n,\,   p\in [1,+\infty),
\\
\label{def|x|infty}
\|x \|_\infty:=\max\left\{|x_i|;\, i\in \{1,\cdots,n\}\right\}
\quad  \text{for}\  x:=(x_1,\cdots,x_n)^T
\in \R^n,
\\
\label{def|P|p}
\| M\|_p : = \max_{\|x \|_p = 1} \|M x \|_p\quad
\text{for}\  M \in \R^{n\times n},
\\
\rho_p (M): = \inf \big\{ \|\Delta M \Delta^{-1} \|_p; \; \Delta \in {\cal D}_{n, +} \big\} \quad  \text{for}\  M \in \R^{n\times n},
\end{gather}
where ${\cal D}_{n, + }$ denotes the set of all $n \times n$ real diagonal matrices whose entries on the diagonal are strictly positive.
It is proved in \cite{Coron-Bastin-Andrea-SICON-2008} that
\begin{gather} \label{0<p}
\rho_0(K)\leq \rho_p(K)\qquad \text{for all}\  p\in [1,+\infty], \\
  \label{0=2}
\rho_0(K)=\rho_2(K) \qquad \text{for all}\ n\in \{1,2,3,4,5\},  \\
  \label{0<2}
\text{for all $n>5$, there are $K\in \R^{n\times n}$ such that $\rho_0(K)<\rho_2(K)$. }
\end{gather}
(In fact, concerning \eqref{0<p}, only the case $p=2$ is treated in \cite{Coron-Bastin-Andrea-SICON-2008}; however
the proof given in this paper can be adapted to treat the case of every $p\in [1,+\infty]$.) Let us point out that,
for every $p\in[1,+\infty]\setminus\{2\}$ and every $n\geqslant 2$, there are examples of $K$ such that inequality
\eqref{0<p} is strict.

Now, we give known sufficient conditions  for the exponential stability of $0$ for \eqref{gen-hyp} with respect to
the $|\cdot|_X$-norm.
\begin{enumerate}[(i)]
\item \label{conditionC1} If $X=C^m([0,L])$ with $m\in\mathbb{N}\setminus\{0\}$, a sufficient condition is:
\begin{equation}
\rho_\infty(K)<1.
\end{equation}
This result is proved by  T. H. Qin \cite{1985-Qin-Tie-hu},
Y. C. Zhao \cite{1986-Zhao-Yan-chun}, T. Li \cite[Theorem 1.3 on page 173]{Li-book} and \cite{Coron-Bastin-2014}.
In fact, in \cite{Li-book,1985-Qin-Tie-hu,1986-Zhao-Yan-chun}, $G$ is assumed to have a special structure;
though, it is was pointed out by J. de Halleux et al. in \cite{2003-De-Halleux-et-al-Automatica} that
the case of a general $G$ can be reduced to the case of this special structure. Moreover \cite{Li-book,1985-Qin-Tie-hu,1986-Zhao-Yan-chun,Coron-Bastin-2014} deal only with the case $m=1$;  but the proofs given there can be adapted to treat the general case $m\in\mathbb{N}\setminus\{0\}$.
\item \label{conditionWmp} If $X$ is the Sobolev space $W^{m,p}([0,L])$ with $m\in\mathbb{N}\setminus\{0,1\}$ and $p\in [1,+\infty]$, a sufficient condition is:
\begin{equation}
\rho_p(K)<1.
\end{equation}
The case $p=2$ is treated by J.-M. Coron, B. d'Andr\'{e}a-Novel and G. Bastin in \cite{Coron-Bastin-Andrea-SICON-2008}, and the case of general $p\in[1,+\infty]$ is treated by in J.-M. Coron and H.-M. Nguyen in \cite{Coron-Nguyen-2014}.
In fact \cite{Coron-Bastin-Andrea-SICON-2008,Coron-Nguyen-2014} deal only with the case $m=2$;  nonetheless the proofs given there can be
adapted to treat the general case $m\in\mathbb{N}\setminus\{0,1\}$.
\end{enumerate}
These conditions are only sufficient conditions for exponential stability. It is natural to ask if one can improve them.
In particular, it is natural to ask if the condition $\rho_0(K)<1$ (which seems to be the weakest possible sufficient
condition: see above) is sufficient for the exponential stability in these spaces $X$. 
Of course this is true for $n=1$. However it turns out to be false already for $n=2$: as shown in \cite{Coron-Nguyen-2014}, for every $n\geq2$,
there are analytic maps $A$ such that, for every $\varepsilon>0$, there are $K\in \R^{n\times n}$ satisfying
\begin{equation}
\label{contitionK}
\rho_0(K)=\rho_2(K)<1<\rho_\infty(K)<1+\varepsilon,
\end{equation}
such that, for every $m\in\mathbb{N}\setminus\{0\}$, 0 is not exponentially stable with respect to the $C^m$-norm
for \eqref{gen-hyp} with $G(u):=Ku$. Let us emphasize that, as already mentioned above, the first inequality of \eqref{contitionK} implies that,
for every  $m\in\mathbb{N}\setminus\{0,1\}$, $0$ is exponentially stable with respect to the $W^{m,2}$-norm
for \eqref{gen-hyp}.

Finally, we point out that the right hand side of \eqref{Assumption-On-K-new} is $\rho_1(K)$. Hence
\eqref{Assumption-On-K-new} is equivalent to
\begin{equation} \label{equivrho1<1}
\rho_1(K)<1.
\end{equation}
\begin{remark}
One has
\begin{equation} \label{1=infty}
\rho_1(K)=\rho_\infty(K).
\end{equation}
Indeed, for every matrix $M\in \R^{n\times n}$, one has
\begin{gather} \label{norm1}
  \|M\|_1=\max\left\{\sum_{j=1}^{n}|M_{ij}|;\,i\in\{1,\ldots,n\}\right\},  \\
  \label{norminfty}
  \|M\|_\infty=\max\left\{\sum_{i=1}^{n}|M_{ij}|;\,j\in\{1,\ldots,n\}\right\}.
\end{gather}
In particular $\|M\|_\infty=\|M^T\|_1$, from which one easily gets
\begin{equation} \label{infty-1}
  \rho_\infty(M)=\rho_1(M^T).
\end{equation}
Using \eqref{norm1} and \cite[Lemma 2.4, page 146]{Li-book}, one has
\begin{equation} \label{=spectral}
  \rho_1(M)=\rho_1(|M|)=\rho(|M|),
\end{equation}
where, for $M\in \R^{n\times n}$, $\rho(M)$ is the spectral radius of $M$ and $|M|$ is
the $n\times n$ matrix whose entries are $|M|_{ij}:=|M_{ij}|$.
In particular
\begin{equation} \label{=rho}
  \rho_1(K)=\rho(|K|)=\rho(|K^T|)=\rho_1(K^T),
\end{equation}
which, together with \eqref{infty-1}, implies \eqref{1=infty}.
\end{remark}
\ \par
\noindent
{\bf The context of entropy solutions.} As we mentioned earlier, very few results exist on the stabilization of hyperbolic systems of conservation laws in the context of entropy solutions. In fact, even in the scalar case, we are only aware of the work \cite{Perrollaz-2013}, in which a suitable stationary feedback law is shown to stabilize the solutions exponentially. Regarding the stabilization of hyperbolic systems of conservation laws, we know only two results obtaining asymptotic stabilization in open loop. The first one, obtained by A. Bressan and G.M. Coclite \cite{Bressan-Coclite}, established that for a general hyperbolic system of conservation laws with either genuinely nonlinear or linearly degenerate characteristic fields (in the sense of Lax), and characteristic speeds strictly separated from $0$, one can steer asymptotically in time any initial condition on a finite interval with sufficiently small total variation to all close constant states, by suitably acting on both sides of the interval. Furthermore, the controllability result may depend on the considered class of solutions, as underlined by \cite{Bressan-Coclite}. Indeed, in this example presented in this article, controllability  holds in the context of classical solutions (\cite{LiRao}) but not in the context of weak entropy solutions, emphasizing that linearization techniques cannot be used in the BV class.
The second result, due to F. Ancona and A. Marson \cite{Ancona-Marson-2007} is concerned with a case of a control from a single boundary point rather than on both sides. 

\subsection{Outline}
The article is organized as follows. In Section \ref{Sec-Preliminaries}, we start with some basic remarks that will be needed in the proof of Theorem \ref{Thm-Main}, recalling in particular some embeddings and the solvability of the Riemann problem away from the boundary and on the boundary. Section \ref{Sec-Proof-Main} gives the proof of Theorem \ref{Thm-Main}. It is divided in several steps. First, Section \ref{Sec-Construction} presents the construction of front-tracking approximations of solutions of \eqref{System-U}--\eqref{BoundaryConditions}. Section \ref{Subsec-Glimm} then introduces the Lyapunov functional we will use, which is a suitably weighted Glimm functional, inspired in \cite{Glimm65} and \cite{Coron-Andrea-Bastin-IEEE-2007,Coron-Bastin-Andrea-SICON-2008}. Section \ref{Subsec-Decay-Glimm} proves the exponential decay of this quantity, which is in fact the main step in our analysis. Though, in order to conclude, we shall provide further estimates, in particular guaranteeing that:
\begin{itemize}
	\item our construction is valid for all time (Section \ref{Subsec-T-infinity});
	\item the rarefaction fronts remain small (Section \ref{Subsec-Rarefaction}); 
	\item the solutions are uniformly bounded in $L^\infty(0,L; BV(0,T))$ for all $T>0$  (Section \ref{Subsec-TimeLips}).
\end{itemize}
One can then pass to the limit in our approximate solutions and prove that they converge to a suitable solution of \eqref{System-U}--\eqref{BoundaryConditions} whose $BV$ norm is exponentially decaying, see Section \ref{Subsec-Limit}, thus finishing the proof of Theorem \ref{Thm-Main}.
%
%
%
%
%
%
%
\section{Preliminaries}\label{Sec-Preliminaries}
%
%
%
%
%
%
%
Let us first point out that, using \eqref{Assumption-On-K-new} and replacing, if necessary,  $r_1(u)$ by $\alpha r_1(u)$ and $\ell_1(u)$ by $\alpha^{-1} \ell_1(u)$ for some suitable $\alpha>0$,
we may assume without loss of generality that
\begin{equation} \label{Assumption-On-K}
\max_{k=1,2} \{ |\ell_1(0) \cdot K r_k(0) | + |\ell_2(0)\cdot K r_k(0) |\}< 1.
\end{equation}
For the proof of Theorem~\ref{Thm-Main},
it will be convenient to define, for $u\in BV(0,L)$, the quantity $\mbox{TV}^*_{[0,L]} (u)$ as
\begin{equation}\label{total-var-*}
\mbox{TV}^*_{[0,L]} (u) =  \mbox{TV}_{[0,L]}( u) + |Ku(L-) - u(0+)|,
\end{equation}
where $u(0+)$ and $u(L-)$ have to be understood respectively as the right and left limits of the function $u$ in $x = 0$ and $x = L$, respectively. Note that this quantity is well-defined for all $u \in BV(0,L)$.
\subsection{On the quantity $\mbox{TV}^*_{[0,L]}$}
We prove the following Lemma:
\begin{lemma} \label{Lem-Norm}
Under assumption \eqref{Assumption-On-K}, the quantity $TV^*(0,L)(\cdot)$ is a norm on $BV(0,L)$
which is equivalent to the norm $|\cdot|_{BV}$. Consequently, there exists a constant $C$ such that for all $u \in BV(0,L)$,
\begin{equation} \label{Norm-BV-L-infty}
\norm{u}_{L^\infty(0,L)} \leq C \mbox{TV}^*_{[0,L]}(u).
\end{equation}
\end{lemma}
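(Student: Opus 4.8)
The plan is to show that $\mathrm{TV}^*_{[0,L]}$ is a norm equivalent to $|\cdot|_{BV}$, from which the embedding \eqref{Norm-BV-L-infty} follows by the standard $BV \hookrightarrow L^\infty$ inclusion in one space dimension. The main point is to control, on the one hand, the extra boundary term $|Ku(L-)-u(0+)|$ by $|u|_{BV}$, and on the other hand to recover the $L^1$ part of $|u|_{BV}$ from $\mathrm{TV}^*_{[0,L]}$. The nondegeneracy (that $\mathrm{TV}^*_{[0,L]}(u)=0$ implies $u=0$) is exactly where assumption \eqref{Assumption-On-K} enters, and this I expect to be the crux.

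First I would verify that $\mathrm{TV}^*_{[0,L]}$ is a seminorm: it is clearly nonnegative, and both $\mathrm{TV}_{[0,L]}$ and $u \mapsto |Ku(L-)-u(0+)|$ are seminorms (the latter because $u \mapsto (u(0+),u(L-))$ is linear into $\R^2\times\R^2$ and $v \mapsto |Kv_2 - v_1|$ is a seminorm on $\R^2\times\R^2$), so their sum is a seminorm. For the upper bound $\mathrm{TV}^*_{[0,L]}(u)\le C|u|_{BV}$, I would note that the one-dimensional embedding gives $\|u\|_{L^\infty(0,L)}\le |u|_{BV}$ (the norm in \eqref{BV-normdef} was chosen precisely so that this holds), so that $|u(0+)|$ and $|u(L-)|$ are each bounded by $|u|_{BV}$; hence $|Ku(L-)-u(0+)| \le (1+\|K\|)|u|_{BV}$, and adding $\mathrm{TV}_{[0,L]}(u)\le |u|_{BV}$ gives the bound.

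The reverse inequality is the substantial step. I would argue that it suffices to bound $\|u\|_{L^1(0,L)}$, equivalently $\|u\|_{L^\infty}$, by $\mathrm{TV}^*_{[0,L]}(u)$, since $\mathrm{TV}_{[0,L]}(u)\le \mathrm{TV}^*_{[0,L]}(u)$ trivially. Pick any $x_0\in(0,L)$ at which $u$ has a value close to its essential supremum; writing $u(x_0)$ in the eigenbasis $\{r_1(0),r_2(0)\}$ and using $u(0+)=u(x_0)-\big(u(x_0)-u(0+)\big)$ with $|u(x_0)-u(0+)|\le \mathrm{TV}_{[0,L]}(u)$, and similarly $|u(L-)-u(x_0)|\le \mathrm{TV}_{[0,L]}(u)$, I would relate $u(0+)$ and $u(L-)$ up to an error controlled by the total variation. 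Assumption \eqref{Assumption-On-K}, which states $\max_{k}\{|\ell_1(0)\cdot Kr_k(0)|+|\ell_2(0)\cdot Kr_k(0)|\}<1$, says precisely that $\|K\|_1<1$ in the coordinates dual to $(\ell_1(0),\ell_2(0))$; decomposing the boundary relation $u(0+)=Ku(L-) - \big(Ku(L-)-u(0+)\big)$ and taking components against $\ell_k(0)$, the contraction $\|K\|_1<1$ lets me absorb the $Ku(L-)$ term and solve for $u(x_0)$ in terms of $\mathrm{TV}_{[0,L]}(u)$ and $|Ku(L-)-u(0+)|$, yielding $|u(x_0)|\le C\,\mathrm{TV}^*_{[0,L]}(u)$ with $C$ depending on $1/(1-\|K\|_1)$.

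The hardest part is making this contraction argument clean: it requires working in the fixed eigenbasis at $u=0$, controlling the oscillation of $u$ across $[0,L]$ by $\mathrm{TV}_{[0,L]}$, and checking that the geometric-series estimate coming from $\|K\|_1<1$ closes. Once $\|u\|_{L^\infty}\le C\,\mathrm{TV}^*_{[0,L]}(u)$ is established, nondegeneracy is immediate (if $\mathrm{TV}^*_{[0,L]}(u)=0$ then $u\equiv 0$), confirming $\mathrm{TV}^*_{[0,L]}$ is a genuine norm; and $\|u\|_{L^1}\le L\|u\|_{L^\infty}\le CL\,\mathrm{TV}^*_{[0,L]}(u)$ together with $\mathrm{TV}_{[0,L]}(u)\le \mathrm{TV}^*_{[0,L]}(u)$ gives $|u|_{BV}\le C'\,\mathrm{TV}^*_{[0,L]}(u)$, completing the equivalence. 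Finally, \eqref{Norm-BV-L-infty} is just the $L^\infty$ bound already obtained.
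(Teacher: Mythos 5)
Your proposal is correct and follows essentially the same route as the paper: both reduce the problem to bounding $|u(L-)|$ by observing that $|u(L-)-Ku(L-)|\le 2\,\mbox{TV}^*_{[0,L]}(u)$ (via the two inequalities $|u(L-)-u(0+)|\le \mbox{TV}_{[0,L]}(u)$ and $|Ku(L-)-u(0+)|\le \mbox{TV}^*_{[0,L]}(u)$), and both invoke \eqref{Assumption-On-K} as the statement that $K$ has matrix norm $\|\cdot\|_1<1$ in the eigenbasis $(r_1(0),r_2(0))$, so that $\mathrm{Id}-K$ is invertible in those coordinates and $|u(L-)|\le C\,\mbox{TV}^*_{[0,L]}(u)$ follows, after which $|u(x)-u(L-)|\le \mbox{TV}_{[0,L]}(u)$ closes the argument. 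Your ``contraction/geometric series'' phrasing is just the Neumann-series form of the paper's direct invertibility claim, so there is no substantive difference.
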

\begin{proof}
Clearly, there exists a positive constant $C>0$ such that
\begin{equation} \nonumber 
\mbox{TV}^*_{[0,L]}(u) \leq C |u|_{BV}  \qquad \mbox{for all} \ u \in BV(0,L).
\end{equation}
(For this property, of course, \eqref{Assumption-On-K} is not needed.
To get the inequality in the other direction, we notice that for $u \in BV(0,L)$, one has 
\begin{equation} \label{uL-0leq}
|u(L-) - u(0+)| \leq  \mbox{TV}_{[0,L]}(u) \ \text{ and } 	|K u(L-) - u(0+)| \leq  \mbox{TV}^{*}_{[0,L]}(u),
\end{equation}
so
\begin{equation*}
|u(L-) - K u(L-) | \leq 2 \mbox{TV}^{*}_{[0,L]}(u).
\end{equation*}
Expressing $u(L-)$ in the basis $(r_1(0),r_2(0))$:
\begin{equation*}
u(L-) =  a_1 r_1(0) + a_2 r_2(0)
\end{equation*}
and using
$$ K r_k(0) = (\ell_1(0) \cdot Kr_k(0) ) r_1(0) + (\ell_2(0) \cdot Kr_k(0) ) r_2(0), $$
we obtain that $u(L-) - K u(L-)$ has the following coordinates
\begin{equation*}
\begin{pmatrix}
a_1 - a_1 (\ell_1(0) \cdot Kr_1(0) ) + a_2 (\ell_1(0) \cdot Kr_2(0)) \\
a_2 - a_1 (\ell_2(0) \cdot Kr_1(0)) + a_2 (\ell_2(0) \cdot Kr_2(0))
\end{pmatrix}.
\end{equation*}
Due to \eqref{Assumption-On-K}, the matrix
\begin{equation*}
\mbox{Id} - \begin{pmatrix}
\ell_1(0) \cdot Kr_1(0) & \ell_1(0) \cdot Kr_2(0) \\
\ell_2(0) \cdot Kr_1(0) & \ell_2(0) \cdot Kr_2(0)
\end{pmatrix}
\end{equation*}
is invertible and consequently for some positive $C>0$:
\begin{equation*}
|u(L-)| \leq C |u(L-) - K u(L-) |.
\end{equation*}
The conclusion follows easily, using $|u(x) - u(L-)| \leq TV_{[0,L]}(u)$.
\end{proof}
%
%
%
%
%
\subsection{On the Riemann problem}
%
%
%
%
\subsubsection{Usual Riemann problem}
Let us make some brief reminders on the Riemann problem for \eqref{System-U}. Details can be found for instance in \cite[Chapter 5]{Bressan-Book-2000}. \par

Following \cite[Section 5.2]{Bressan-Book-2000}, we introduce the Lax curves as the curve obtained by gluing of the admissible part of the Hugoniot locus and of the rarefaction curves, i.e.
\begin{equation} \label{Lax-Curve}
	\Psi_k(\sigma, u) =
		\left\{
			\begin{array}{ll}
				S_k(\sigma, u) \quad &\hbox{ if }€ \sigma <0,
				\\
				R_k(\sigma, u) \quad &\hbox{ if } \sigma \geq 0.
			\end{array}
		\right.			
\end{equation}
Here, $R_k(\sigma,u)$ corresponds to the rarefaction curves, that is, the orbits of the vector fields $r_{k}$:
$$ \frac{dR_k(s,u)}{ds} = r_k(R_k(s,u)), \quad s \in [0,\sigma], \qquad R_k(0,u) = u. $$
The part corresponding to $\sigma \geq 0$ (due to \eqref{Genuine-Non-Linear}) is composed of points $u_{+}$ which can be connected to $u$ from left to right by a rarefaction wave:
\begin{equation} \label{Rarefaction}
u(t,x)= \left\{ \begin{array}{ll}
u & \text{ if } x  <  \lambda_{k}(u) t, \\
R_{i}(\sigma,u) & \text{ if } x  = \lambda_{k}(R_{k}(\sigma,u)) t, \\
u_{+} & \text{ if } x > \lambda_{k}(u_{+}) t.
\end{array} \right.
\end{equation}
On the other side $S_k(\sigma, u)$ stands for the shock curve, which describes the $k$-th branch of the Hugoniot locus which gathers point $u_{+}$ satisfying for a fixed state $u$
the Rankine-Hugoniot condition:
\begin{equation} \label{Eq:RankineHugoniot}
f({u}_{+}) - f({u}) = {s} \, \big( u_{+} - u \big), \ \ s \in \R.
\end{equation}
To be slightly more precise, the $k$-th shock curve is defined for $\sigma$ small as follows:
$$ u_+ = S_k(\sigma,u) $$
if and only if $u_+$ and $u$ satisfy
$$ u_+ - u = \sigma r_k(u_+,u), $$
where $r_k(u_+, u)$ is the (suitably normalized) $k$-th eigenvector of the matrix
$$ A(u_+,u) = \int_0^1 Df( u + t(u_+ - u)) \, dt. $$
For $\sigma <0$, points $u_{+}=S_{k}(\sigma,u)$ can be connected to $u$ from left to right by an admissible shock wave:
\begin{equation} \label{Choc}
u(t,x)= \left\{ \begin{array}{ll}
u & \text{ if } x < s t, \\
u_{+} & \text{ if } x > s t,
\end{array} \right.
\end{equation}
where the shock speed $s$ is equal to $\lambda_k(u_+,u)$, the $k$-th eigenvalue of the above matrix $A(u_+,u)$. \par
We recall that, under suitable parameterization, the function $(\sigma, u) \mapsto \Psi_k(\sigma, u)$ is of class $C^2$, see for example \cite[p. 99]{Bressan-Book-2000}. \par
Finally, following Lax \cite{Lax}, using the implicit function theorem, we see that there exists $\delta >0$ such that for all $u_-$ and $u_+$ with $|u_-| \leq \delta$ and $|u_+| \leq \delta$, there exists $\sigma_1$ and $\sigma_2$ small such that
\begin{equation} \label{ConnectingU-U+}
u_+ = \Psi_2(\sigma_2, \Psi_1(\sigma_1, u_-)),
\end{equation}
and besides, there exists a constant $C>0$ such that
$$ \frac{1}{C} |u_+ - u_-| \leq |\sigma_1| + |\sigma_2 | \leq C |u_+ - u_-|. $$
See for instance \cite[Theorem 5.3]{Bressan-Book-2000}. \par
In the following, when considering two states $u_-$ and $u_+$ satisfying $u_+ = \Psi_k(\sigma,u_-)$, we will say that $u_-$ and $u_+$ are connected through a $k$-wave of strength $|\sigma|$. If $\sigma \geq 0$, this wave is a $k$-rarefaction, while it is a $k$-shock otherwise. \par
\subsubsection{Boundary Riemann problem}
\label{SubsecBRP}
Since we are considering solutions to an initial-boundary value problem, we also need to consider the Riemann problem on the boundaries. Due to the assumption \eqref{PositiveVel}, we only consider the problem on the left side of the interval. Actually, under this assumption, this problem is fairly simple and solve as in the usual case. Let us consider indeed on the boundary $x=0$ the conditions:
\begin{equation*}
u(t,0) = \left\{ \begin{array}{l}
u_{-} \text{ for } t >t_{0} , \\
u_{+} \text{ for } t <t_{0} ,
\end{array} \right.
\end{equation*}
with $u_{-}$ and $u_{+}$ sufficiently small. Then writing again \eqref{ConnectingU-U+}, this problem can be solved as in the usual case by a $1$-wave followed by a $2$-wave (from left to right, that is from top to bottom). \par
Note that under the constraint \eqref{BoundaryConditions}, boundary conditions $u_{0}$ at $x=0^{+}$ and $u_{L}$ at $x=L^—$ generate a Riemann problem between $u_{0}$ and $K u_{L}$ at $x=0$.
%
%
%
%
%
%
%
%
%
\section{Proof of Theorem \ref{Thm-Main}}\label{Sec-Proof-Main}
In the following, we work under the assumptions of Theorem \ref{Thm-Main}, and in particular \eqref{PositiveVel} and \eqref{Assumption-On-K}.
Theorem \ref{Thm-Main} is based on a construction of solutions of \eqref{System-U}--\eqref{BoundaryConditions} relying on a wave-front tracking algorithm. This algorithm generates approximations of a solution $u$ of \eqref{System-U}, which have a particular shape.
To be more precise, for $h$ small, we look for $u_{h} = u_{h}(t,x)$ defined for $t\geq 0$ and $x \in [0,L]$ such that:
\begin{itemize}
	\item $u_{h}$ is a piecewise constant function on $\R_{+} \times [0,L]$, with a finite number of discontinuities (locally in time), which are straight lines (called {\it fronts}),
	\item each front is either a {\it rarefaction front} or a {\it shock}. In the former case, the states on the sides of the discontinuities are connected for a rarefaction wave, in the latter by a shock,
	\item the rarefaction fronts are of strength ${\mathcal O}(h)$,
	\item the boundary condition is satisfied for all times (taking left and right limits at discontinuity points),
	\item a quantity equivalent to the $TV^*$ norm of $u(t)$ decays exponentially as time evolves.
\end{itemize}
%
%
%
%
%
\subsection{Construction of front-tracking approximations}
\label{Sec-Construction}
We mainly follow R. DiPerna's strategy \cite{DiPerna-1976} consisting, starting from a piecewise constant approximation of the initial data, in solving the generated Riemann problems and replacing rarefaction waves with piecewise constant approximations called {\it rarefaction fans}. When two discontinuities meet, the process is iterated, with an important convention on rarefactions, that is {\it they are not re-split} across an interaction. See below for a more precise description. This means in particular that there are two ways to treat rarefaction waves in the process; let us describe these two methods. \par
\ \par
\noindent
{\bf Rarefaction fans.} A $k$-rarefaction wave $(u_{-},u_{+})$ with $u_{+}=R_{k}(\sigma,u_{-})$, $\sigma>0$, centered at $(\overline{t},\overline{x})$ can be approximated by a $k$-rarefaction fan (of accuracy $h$) as follows. Set
$$ p = \lceil \sigma/ h \rceil. $$
If $p >1$, i.e. if $\sigma >h$, we define the intermediate states for $j \in \{1, \cdots, p\}$:
$$ u_{j} = R_k(\sigma/p, u_{j-1}), \quad u_{0} = u_{-}, $$
and define
$$ x_{j+1/2} (t) = \overline{x} + \lambda_k(u_{j+1}) (t-\overline{t}). $$
In that case, the rarefaction wave is approximated locally by
\begin{equation} \label{U-h-Fan-front}
\tilde{u}(t,x) = \left\{ 
\begin{array}{ll}
	u_{0} = u_{-} \quad & \hbox{for } x < x_{1/2} (t) \\
	u_{j} \quad & \hbox{for } x \in (x_{j-1/2}(t), x_{j+1/2}(t)),\, j \in \{1,\cdots,p_k-1\}, \\
	u_{p} = u_{+} \quad & \hbox{for } x > x_{p-1/2}(t).
\end{array} \right.
\end{equation}
If $p <1$, i.e. if $\sigma \in (0,h]$, we simply set
\begin{equation*}
\tilde{u}(t,x) = \left\{ \begin{array}{l}
				u_{-} \quad \hbox{ for } x < x_{1/2} (t) \\
				u_{+} \quad \hbox{ for } x > x_{1/2} (t),
			\end{array} \right.
\end{equation*}
\ \par
\noindent
\noindent {\bf Approximating rarefaction waves by a single front.} In that case, the above Riemann problem is solved using only one front, i.e. the approximate solution $\tilde{u}$ is locally given by
\begin{equation} \label{U-h-Rarefaction-1front}
\tilde{u}(t,x) = \left\{ \begin{array}{l}
				u_{-} \quad \hbox{ for } x < \overline{x} + \lambda_k(u_{+}) (t-\overline{t}), \\
				u_{+} \quad \hbox{ for } x > \overline{x} +  \lambda_k(u_{+}) (t - \overline{t}).
			\end{array} \right.
\end{equation}
\ \par
\noindent
\noindent {\bf The wave-front tracking algorithm.} We start from an approximate sequence $u_{0,h} \in BV(0,L)$, $h>0$, of the initial condition, satisfying
\begin{equation}
	\label{Conv-u-0-h}
\| u_{0,h} \|_{\infty} \leq \| u_{0} \|_{\infty}, \ \ TV(u_{0,h}) \leq TV(u_{0}), \ \ u_{0,h} \underset{h\to0}\longrightarrow u_{0} \text{ in } L^{1}(0,L).
\end{equation}
Now the construction of an approximate solution of \eqref{System-U} is then done as follows: \par
\ \par
\noindent 1. At time $t = 0$, we construct $u_h$ as the solution of the Riemann problems for $u(t= 0)$ for which all rarefaction waves are replaced by rarefaction fans (with accuracy $h$). This includes the Riemann problem generated by $u_{0,h}(0^{+})$ and $K u_{0,h}(L^{-})$. We extend the resulting discontinuities (called fronts) as straight lines, until two of them meet (at a point called an {\it interaction point}), or until one of them meets the boundary (which is necessary the right one, since all fronts have positive speeds under the assumption \eqref{PositiveVel}). \par
\ \par
\noindent 2. When a front hits the (right) boundary, we solve the corresponding Riemann boundary problem between $K u_h(t-,L{-})$ and $u_h(t-,0{+})$. Again in that case we approximate all outgoing rarefaction waves by rarefaction fans. \par
\ \par
\noindent 3. When two fronts (say of family $k$ and $\ell$) interact at some time $t$ in some point $x \in (0,L)$:
\begin{itemize}
\item if $k = \ell$, solve the resulting Riemann problem between the leftmost and rightmost states and approximate the outgoing rarefaction wave of the family $k$ (if any) by a single front and the outgoing rarefaction wave of the other family (if any) by a rarefaction fan.
\item if $k \neq \ell$, solve the Riemann problem and approximate each outgoing rarefaction wave (if any) by a single front.
\end{itemize}
\ \par
\noindent 4. If at some time, three fronts (or more) interact in the interior of the domain or two fronts (or more) interact at the boundary $ x = L$, we slightly change the velocity of one of the incoming fronts so that there is only two fronts meeting at the same time. These changes of velocity are done so that the new (constant) velocity $c$ belongs to an $h$-neighborhood of the expected velocity. We also modify the velocities similarly to avoid having several interactions at the same time. (This is classical in the context of front tracking approximation, see \cite{Bressan-Book-2000}.) \par
\ \\
Our construction works as long as the number of interactions of fronts is finite and $u_h$ stays in the set where we can solve the Riemann problem. Let us therefore define
\begin{multline} \label{DefT*}
T_h^* = \sup\{ t >0, \hbox{ such that the number of fronts is finite in } (0,t) \times (0,L) \\
		\hbox{ and } \norm{u_h}_{L^\infty((0,t)\times (0,L))} \leq \delta_0\}, 
\end{multline}
where $\delta_0 \in (0, \delta)$ is a positive parameter fixed below by \eqref{PositiveVelocities-c}--\eqref{Feedback-Condition}, $\delta$ being the parameter in \eqref{ConnectingU-U+}.
\begin{remark}
Note that $T_h^*>0$ as the fronts propagate at bounded velocities and since the initial data is piecewise constant. We will later show that $T^*_h$ actually is infinite for all $h>0$, see Lemma \ref{Lemma-T-Infinity}.
\end{remark}
%
%
%
%
%
%
\subsection{A Glimm-type functional}\label{Subsec-Glimm} 
In order to get estimates on this approximate solution $u_h$ of \eqref{System-U}--\eqref{BoundaryConditions}, we will introduce a functional resembling Glimm's one \cite{Glimm65} and adapted to our problem. \par
We choose $\delta_0 \in (0, \delta)$, $c_* >0$, $\gamma >0$, $\varepsilon >0$ such that
\begin{align}
\label{PositiveVelocities-c} 
& c_* < \min_{ |u| \leq \delta_0} \lambda_1(u) < \max_{ |u| \leq \delta} \lambda_1(u) < \min_{ |u| \leq \delta} \lambda_2(u),\\
\label{Feedback-Condition}
& \max_{ |u| \leq \delta_0} \max_{k \in \{1, 2\}} \{ |\ell_1(Ku) \cdot Kr_k ( u) | + |\ell_2(Ku) \cdot Kr_k(u)  |\}  <  \exp(-\gamma L) - \varepsilon.
\end{align}
This can be done according to the assumptions \eqref{PositiveVel} and \eqref{Assumption-On-K}.

Next, for a piecewise constant function $U$ on $[0,L]$ with $TV^*(U)$ small enough:
\begin{itemize}
\item we denote $x_1 < x_2 <\cdots <x_n$ the set of discontinuities in $(0,L)$,
\item for each $i \in \{1, \cdots, n\}$, we let $u_{i,-}$ and $u_{i,+}$ the limits of $U$ at $x_i$ from the left and from the right respectively,
\item for each $i \in \{1, \cdots, n\}$, we introduce the values $\sigma_{i,1}$ and $\sigma_{i,2}$ such that
\begin{equation*}
u_{i,+} = \Psi_2(\sigma_{i,2}, \Psi_1(\sigma_{i,1}, u_{i,-})), 	
\end{equation*}	
which are obtained by the solvability of the Riemann problem,
\item finally, we quantify the ``interaction'' on the boundary between $K u(L-)$ and $u(0+)$ by $\sigma_{0,1}$, $\sigma_{0,2}$ such that
\begin{equation*}
u(0+) = \Psi_2(\sigma_{0,2}, \Psi_1(\sigma_{0,1}, Ku(L-))), 	
\end{equation*}	
whose existence is granted by the solvability of the boundary Riemann problem.
\end{itemize}
Now for such a piecewise constant function $U$, we introduce the following functionals:
\begin{eqnarray}
\label{Linear-Functional}
V(U) & = & \sum_{i=0}^n \left( |\sigma_{i,1}| + |\sigma_{i,2}| \right) e^{-\gamma x_i },  \\
\label{Quadratic-Functional}
Q(U) & = &  \sum_{(x_i, \sigma_i)}  |\sigma_i| e^{-\gamma x_i } \Bigg(\sum_{(x_j,\sigma_j) \hbox{\scriptsize \, approaching }(x_i, \sigma_i)} |\sigma_{j}|    e^{-\gamma x_j }\Bigg),
\end{eqnarray}
where a front $(x_j,\sigma_j)$ of a family $k_j$ is said to be approaching of a front $(x_i, \sigma_i)$ of a family $k_i$ if and only if one of the conditions is satisfied:
\begin{itemize}
	\item $x_j < x_i$ and $(k_i,k_j) = (1,2)$,
	\item $x_j < x_i$, $k_i = k_j$ and at least one of $\sigma_j$ or $\sigma_i$ is negative.
\end{itemize}
We underline that the difference with the usual functionals of total strength and of interaction potential lies in the exponentials, and on the fact that the boundary is taken into account at index $i=0$, emphasizing the special role played by the feedback operator. \par
As seen from the construction, at all times except in a discrete set, namely the times of interaction of fronts, the discontinuities in the approximation $u_h$ given by the front tracking algorithm are connected through a wave of family $1$ or $2$, that is, for almost all $t>0$, for each $x_{i}(t)$, either $\sigma_{i,1}(t)$ or $\sigma_{i,2}(t)$ vanishes. In that case, we will say that $x_i(t)$ corresponds to a $k$-front, where $k \in \{1,2\}$ is such that $\sigma_{i,k} \neq 0$. \par
For later use, let us also point out that two rarefaction fronts of the same family cannot meet. \par
\ \par
\noindent
{\bf Notations.} For sake of simplicity, we will use the slight abuse of notation $V(t) = V(u_h(t))$ and $Q(t) = Q(u_h(t))$. \par
\subsection{Decay of Glimm's functional}\label{Subsec-Decay-Glimm}

Our goal is to prove the following result:

\begin{lemma}\label{Lem-Lyapunov}
Under the setting of Theorem \ref{Thm-Main}, there exist positive constants $\varepsilon_0>0$, $c_0>0$, $\nu>0$ such that for any $h>0$, if we define the approximate solution $u_h(t)$ of \eqref{System-U} using the front tracking approximation explained above, starting from $u_{0,h}$ satisfying
\begin{equation} \label{Init-V-small}		
V(u_{0,h}) \leq \varepsilon_0, 
\end{equation}
then the functional
\begin{equation} \label{Def-J}
J(t) = V(u_h(t))+c_0 Q (u_h(t))
\end{equation}
satisfies
\begin{equation} \label{Exp-Decay-J}
J(t) \leq \exp(- \nu t) J(0), \qquad t \in [0, T_h^*).
\end{equation}
\end{lemma}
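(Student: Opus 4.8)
**

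The plan is to prove the exponential decay of $J(t)=V(t)+c_0 Q(t)$ by a differential-inequality argument. Between consecutive interaction times $J(t)$ is piecewise constant in a trivial sense along the propagation of fronts, but the weights $e^{-\gamma x_i}$ depend on the positions $x_i(t)=x_i + \lambda_k t$, which move to the right at positive speeds (by \eqref{PositiveVel}). So the decay of $J$ has two sources: a continuous-in-time decay coming from the motion of fronts toward larger $x$ (hence smaller weight $e^{-\gamma x_i(t)}$), and the jumps of $J$ at interaction and boundary-hitting times. The strategy is to show that between interactions $\frac{d}{dt}J \leq -\tilde\nu J$ for some $\tilde\nu>0$, and that at every interaction time (both interior interactions and boundary reflections) the functional $J$ does not increase, i.e. $\Delta J\le 0$. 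Combining these two facts yields \eqref{Exp-Decay-J}.

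\textbf{Continuous decay between interactions.} First I would compute $\frac{d}{dt}V$ along the motion. Each front $(x_i,\sigma_i)$ of family $k_i$ contributes $|\sigma_i| e^{-\gamma x_i(t)}$ with $\dot x_i = \lambda_{k_i}(\cdot)\ge c_*>0$, so
\begin{equation}
\frac{d}{dt}\Big(|\sigma_i| e^{-\gamma x_i(t)}\Big) = -\gamma \lambda_{k_i} |\sigma_i| e^{-\gamma x_i(t)} \le -\gamma c_* |\sigma_i| e^{-\gamma x_i(t)}.
\end{equation}
Summing over all interior fronts gives $\frac{d}{dt}V \le -\gamma c_* V$ between interactions, up to the boundary term at $i=0$, which I must handle by noting that the boundary interaction $(\sigma_{0,1},\sigma_{0,2})$ sits at $x_0=0$ and is only modified at discrete boundary-hitting times, not continuously. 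A parallel computation gives $\frac{d}{dt}Q \le -\gamma c_* Q$ since $Q$ is built from products of the same weighted strengths, each of which decays at rate at least $\gamma c_*$ (the product of two such factors decays at rate $2\gamma c_*$). Hence between interactions $\frac{d}{dt}J \le -\gamma c_* J$.

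\textbf{Non-increase at interactions.} This is the heart of the argument and, I expect, the main obstacle. At an interior interaction of two approaching fronts of strengths $\sigma',\sigma''$, the classical Glimm interaction estimates (see \cite{Bressan-Book-2000,Glimm65}) give that the change in total strength is bounded by $C|\sigma'||\sigma''|$, while the change in the interaction potential $Q$ absorbs this because the product $|\sigma'||\sigma''|$ is removed from $Q$ after interaction. Choosing $c_0$ large enough relative to $C$ and using that $V$ stays small (guaranteed by \eqref{Init-V-small} with $\varepsilon_0$ small), one gets $\Delta V \le c_0\,\Delta(-Q)$, so $\Delta J \le 0$; here one must be careful that the weights $e^{-\gamma x_i}$ at the interaction point are essentially equal for the interacting fronts (they meet at the same $x$), so the weighted estimates reduce to the unweighted ones up to the common factor $e^{-\gamma\bar x}$. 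The genuinely new and delicate point is the boundary reflection: when a front hits $x=L$, it is reflected through $K$ into new outgoing $1$- and $2$-fronts emanating from $x=0$. The incoming front has weight $e^{-\gamma L}$ while the outgoing fronts have weight $e^{0}=1$, so there is a potential amplification by a factor $e^{\gamma L}$; this must be strictly beaten by the contraction in the feedback, which is exactly why condition \eqref{Feedback-Condition} demands the feedback gain to be smaller than $e^{-\gamma L}-\varepsilon$. The estimate to establish is that the weighted strength of the reflected outgoing waves, namely $\sum_k(|\sigma_{0,k}^{\mathrm{new}}|)\cdot 1$, is bounded by $(e^{-\gamma L}-\varepsilon)^{-1}\cdot(\text{incoming weighted strength})\cdot e^{\gamma L}$, which after accounting for the weight ratio $e^{\gamma L}$ yields a net contraction factor strictly less than $1$; linearizing the boundary Riemann map at $0$ and using \eqref{Feedback-Condition} together with $\|u_h\|_\infty\le C\,\mathrm{TV}^*$ (Lemma \ref{Lem-Norm}) to control the nonlinear remainder is what makes this rigorous.

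\textbf{Conclusion.} Once $\Delta J\le 0$ at every discrete interaction and $\frac{d}{dt}J\le -\gamma c_* J$ between them, Gronwall's inequality over $[0,T_h^*)$ gives $J(t)\le e^{-\gamma c_* t}J(0)$, i.e. \eqref{Exp-Decay-J} with $\nu=\gamma c_*$. Throughout I must keep $V(t)$ (and hence $\mathrm{TV}^*(u_h(t))$) small so that all interaction estimates remain valid and $u_h$ stays within the region $\{|u|\le\delta_0\}$ where the Riemann problem is solvable; this is a standard bootstrap, closed by choosing $\varepsilon_0$ small so that the decay of $J$ keeps $V$ below the threshold for all $t<T_h^*$. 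The main obstacle, to reiterate, is the boundary-reflection estimate, where the interplay between the exponential weight $e^{-\gamma x}$ and the feedback gain in \eqref{Feedback-Condition} must be quantified precisely.
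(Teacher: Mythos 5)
Your proposal is correct and follows essentially the same route as the paper's proof: continuous decay of the weighted functionals coming from the strictly positive speeds, non-increase at interior interactions via Glimm's estimates with $c_0$ chosen of order $e^{2\gamma L}$, and a strict decrease at boundary reflections where the weight gain $e^{\gamma L}$ (from $x=L$ to $x=0$) is beaten by the feedback condition \eqref{Feedback-Condition}, all closed by a bootstrap keeping $V$ small. One small slip: the reflected total strength is bounded by $(e^{-\gamma L}-\varepsilon)\,|\hat\sigma|$, not by a factor $(e^{-\gamma L}-\varepsilon)^{-1}$ times the incoming weighted strength times $e^{\gamma L}$; the net contraction factor $1-\varepsilon e^{\gamma L}<1$ that you state as the conclusion is the correct one.
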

We recall that the approximation is well-defined till $T^{*}$, see \eqref{DefT*}. \par

\begin{proof}
We split the proof in two parts: first we study the evolution of $V$ and $Q$ in various cases, and then we come back to the functional $J$. \par
\ \\
\noindent
{\bf 1. Behavior of $V$ and $Q$.} \par
We fix a time $t_0 \in (0,T^*_h)$. We recall that interactions in the domain only involve two fronts and interaction on the boundary only involve one. We discuss the evolution of the functionals $V$ and $Q$ locally around $t_0$ according to three cases (see Figures \ref{fig-case-1-2} and \ref{fig-case-3}):
\begin{itemize}
\item \textbf{Case 1:} There is no interaction at $t=t_0$ in $(0,L)$, nor on the boundary.
\item \textbf{Case 2:} There is a front interaction at $t = t_0$ in $(0,L)$.
\item \textbf{Case 3:} There is a front hitting the boundary at $t = t_0$.
\end{itemize}
\begin{figure}[h]
\begin{center}
\begin{tikzpicture}[scale=0.59]
    \draw [thick] (0,1) -- (8,1);
    \draw [thick] (0,1) -- (0,10);
    \draw [thick] (8,1) -- (8,10);
    \draw (0,6) -- (8,6);

    \draw [thick] (12,1) -- (20,1);
    \draw [thick] (12,1) -- (12,10);
    \draw [thick] (20,1) -- (20,10);
    \draw (12,6) -- (20,6);

    \node [left] at (0,6) {$t=t_0$};
    \node [right] at (20,6) {$t=t_0$};

    \draw [blue] (1,4) -- (2,9);
    \draw [blue] (3,4) -- (5,8);
    \draw [blue] (5,4) -- (6,8);

    \draw [blue] (14,4) -- (16,8);
    \draw [blue] (14,2) -- (16,10);
    \draw [blue] (17,4) -- (18,7);

    \node at (4,0) {Case 1};
    \node at (16,0) {Case 2};
\end{tikzpicture}
\end{center}
\caption{Left, Case 1: no interaction at time $t_0$. Right, Case 2: two fronts interact at $t = t_0$.}
\label{fig-case-1-2}
\end{figure}
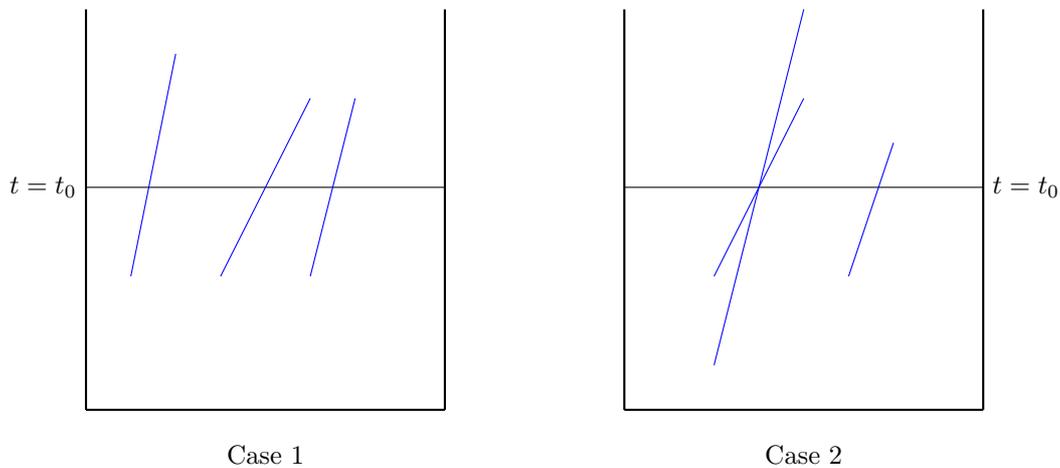

\noindent \textbf{Case 1.} We suppose that there is no interaction at $t=t_0$. Since there is a finite number of fronts at time $t_0$, there exists a neighborhood $\mathscr{T}$ of $t_0$ on which there is no interaction. \par
As the approximation $u_h(t_0)$ is a piecewise constant function, we call as before $x_1(t) < x_2(t)<\cdots < x_{n}(t)$ the discontinuities at time $t \in \mathscr{T}$ and $u_{i,-},\,  u_{i,+}$ the limits of $u_h(t,\cdot)$ at $x_i$ from the left and from the right. Note that by construction, each trajectory $t\mapsto x_i(t)$ corresponds to a $k_i$-front and the states $u_{i,-}$ and $u_{i,+}$ on its sides do not depend on $t \in \mathscr{T}$. Therefore, each front has constant strength and velocity and moreover, due to condition \eqref{PositiveVelocities-c}, one has $x_i'(t) \geq c_*$. Therefore, we obtain, for all $t \in \mathscr{T}$,
\begin{equation} \label{V-Q-Case1}
 \frac{dV}{dt}(t)\leq -c_* \gamma  V(t), \qquad 	 \frac{dQ}{dt}(t)\leq -2c_* \gamma Q(t).
\end{equation}
\noindent
\textbf{Case 2.} We suppose that two fronts interact in $(0,L)$. Here it necessary to relate the strength of the outgoing fronts with the incoming ones. Following \cite{Glimm65} (see also \cite[Lemma 7.2]{Bressan-Book-2000}), we have the following Glimm's interaction estimates:
\begin{lemma}[\cite{Glimm65}]
\label{Lemma-Interaction}
Consider an interaction between two incoming wave-fronts and assume that the left, middle and right states $u_L, u_M$ and $u_{R}$ satisfy $|u_L| \leq \delta$, $|u_M| \leq \delta$ and $|u_R| \leq \delta$.
\begin{itemize}
\item \emph{Distinct family:} Let $\hat \sigma_1$ and $\hat \sigma_2$ be the sizes of two incoming fronts corresponding respectively to the family $1$ and $2$. Then the outgoing fronts have strength $\sigma_1, \, \sigma_2$ satisfying:
\begin{equation} \label{Interacting-Fronts-1}
|\sigma_1 - \hat \sigma_1| + |\sigma_2 - \hat \sigma_2| \leq C_{\delta} |\hat \sigma_1|\, |\hat \sigma_2|.
\end{equation}
\item \emph{Same family:} Let $\tilde \sigma, \, \hat \sigma$ be the sizes of two incoming fronts both belonging to the $k$-th characteristic family. Then the outgoing fronts have strength $\sigma_k$ for the outgoing fronts belonging to the $k$-th characteristic family and $\sigma_{k'}$ for the other family and satisfy:
\begin{equation} \label{Interacting-Fronts-2}
|\sigma_k - (\tilde \sigma+ \hat \sigma)| + |\sigma_{k'}| \leq C_{\delta} |\tilde \sigma|\, |\hat \sigma| \left(|\tilde \sigma|+  |\hat \sigma|   \right).
\end{equation}
\end{itemize}
\end{lemma}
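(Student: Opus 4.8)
The plan is to recognize this as Glimm's classical interaction estimate and to derive it from the $C^2$ regularity of the Lax-curve map $\Psi_k$ together with the local solvability of the Riemann problem recorded in \eqref{ConnectingU-U+}. In both cases the mechanism is identical: encode the outgoing strengths as $C^2$ functions of the incoming ones, observe that the relevant error vanishes whenever one of the incoming strengths is zero, and then quantify this vanishing through a Taylor expansion with remainder.

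More concretely, I would first set up the interaction as a composition of Lax curves. In the distinct-family case the incoming configuration connects $u_L$ to $u_R$ through one $2$-front followed by one $1$-front, i.e. $u_R = \Psi_1(\hat\sigma_1, \Psi_2(\hat\sigma_2, u_L))$ (or the analogous order), while the resolved Riemann problem reads $u_R = \Psi_2(\sigma_2, \Psi_1(\sigma_1, u_L))$. By \eqref{ConnectingU-U+} and the implicit function theorem, the outgoing strengths $(\sigma_1,\sigma_2)$ are $C^2$ functions of $(\hat\sigma_1,\hat\sigma_2)$ on the compact set $\{|u| \le \delta\}$, with derivatives bounded uniformly in $u_L$. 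The map $E(\hat\sigma_1,\hat\sigma_2) := (\sigma_1 - \hat\sigma_1,\, \sigma_2 - \hat\sigma_2)$ vanishes identically on the two axes $\{\hat\sigma_1 = 0\}$ and $\{\hat\sigma_2 = 0\}$, since a single wave of one family simply passes through unchanged. Writing $E(\hat\sigma_1,\hat\sigma_2) = \int_0^{\hat\sigma_1}\!\int_0^{\hat\sigma_2} \partial_1\partial_2 E\, ds\, dt$, the bound $|E| \le C_\delta |\hat\sigma_1|\,|\hat\sigma_2|$ follows at once, with $C_\delta$ the supremum of the mixed second derivative over $\{|u|\le\delta\}$; this is \eqref{Interacting-Fronts-1}.

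For the same-family case the same argument already yields a quadratic bound $O(|\tilde\sigma|\,|\hat\sigma|)$ on $|\sigma_k - (\tilde\sigma + \hat\sigma)| + |\sigma_{k'}|$, but the statement demands the sharper cubic factor $(|\tilde\sigma| + |\hat\sigma|)$, which is the real content. To obtain it I would exploit that the $k$-th Lax curve is almost a one-parameter group: the rarefaction branch $R_k$ is exactly the integral flow of $r_k$, hence additive in its parameter, and, by genuine nonlinearity \eqref{Genuine-Non-Linear}, the shock branch $S_k$ has second-order contact with $R_k$ at $\sigma = 0$. Combining these gives the key expansion $\Psi_k(\hat\sigma, \Psi_k(\tilde\sigma, u)) = \Psi_k(\tilde\sigma + \hat\sigma, u) + O\big(|\tilde\sigma|\,|\hat\sigma|\,(|\tilde\sigma| + |\hat\sigma|)\big)$, uniformly for $|u| \le \delta$. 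Feeding this third-order residual through the Riemann solver of \eqref{ConnectingU-U+} produces a correction of the outgoing $k$-wave strength and a spurious $k'$-wave, both of the same cubic order, which is precisely \eqref{Interacting-Fronts-2}.

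The main obstacle is exactly this last cubic estimate: the quadratic bounds are immediate from the vanishing-on-axes observation, whereas extracting the extra factor $(|\tilde\sigma| + |\hat\sigma|)$ requires the second-order contact of the shock and rarefaction curves and a careful tracking of the third-order Taylor remainder. Since all of this is completely standard, an alternative is simply to invoke \cite[Lemma 7.2]{Bressan-Book-2000} or the original computation in \cite{Glimm65}; I would nonetheless verify that the uniform constant $C_\delta$ depends only on the $C^2$ norm of the flux on $\{|u| \le \delta\}$, as this uniformity is what the subsequent Lyapunov estimates rely on.
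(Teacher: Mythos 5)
The paper itself offers no proof of this lemma: it is imported directly from \cite{Glimm65} and \cite[Lemma 7.2]{Bressan-Book-2000}, which is also the fallback you mention at the end. Your sketch is the standard argument behind those references, and its overall structure is right: parametrize the outgoing strengths as smooth functions of the incoming ones via the Riemann solver \eqref{ConnectingU-U+}, note that the error vanishes whenever one incoming strength is zero, and represent the error as a double integral of the mixed second derivative to obtain the quadratic bound \eqref{Interacting-Fronts-1}; for \eqref{Interacting-Fronts-2}, exploit the exact group property of the rarefaction flow together with the tangency of the shock and rarefaction branches.

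There is, however, one genuine gap in the step that produces the cubic factor. Writing $E(\tilde\sigma,\hat\sigma)=\int_0^{\tilde\sigma}\int_0^{\hat\sigma}\partial_1\partial_2E$, the quadratic bound needs only $E\in C^2$; but to upgrade to $|E|\le C|\tilde\sigma\hat\sigma|(|\tilde\sigma|+|\hat\sigma|)$ you must bound $\sup|\partial_1\partial_2E|$ over the integration rectangle by $C(|\tilde\sigma|+|\hat\sigma|)$. The mechanism is that $E$ (hence $\partial_1\partial_2E$) vanishes identically on the closed first quadrant, because two $k$-rarefactions compose exactly under the flow of $r_k$; every point of the rectangle lies within distance $|\tilde\sigma|+|\hat\sigma|$ of that quadrant, so the desired bound follows \emph{provided} $\partial_1\partial_2E$ is Lipschitz, i.e.\ the Lax curves and the composed interaction map are $C^{2,1}$ and not merely $C^2$ as recorded after \eqref{Lax-Curve}. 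With mere continuity of the second derivatives one only gets $o(|\tilde\sigma\hat\sigma|)$, which is not enough for the Glimm functional arguments that follow. This extra regularity is true and is part of the content of \cite[Theorem 5.3]{Bressan-Book-2000}, but it has to be invoked explicitly. Two smaller remarks: the second-order contact of $S_k$ with $R_k$ at $\sigma=0$ follows from the Rankine--Hugoniot relations under the standard parametrization and is not a consequence of genuine nonlinearity \eqref{Genuine-Non-Linear} (which instead fixes the admissible half of the Hugoniot locus in \eqref{Lax-Curve} and prevents two same-family rarefaction fronts from meeting); and in the distinct-family case you should fix the incoming order as $u_R=\Psi_1(\hat\sigma_1,\Psi_2(\hat\sigma_2,u_L))$, since under \eqref{PositiveVel} it is the faster $2$-front that overtakes the $1$-front.
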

We assume that the interaction takes place at $t = t_0$, $x = x_i$, and we denote by $\sigma_{i}(t_0-)$, $\sigma_{i+1}(t_0-)$ the strength of the incoming waves, and by $\sigma_{i,1}(t_0+)$, $\sigma_{i,2}(t_0+)$ the strength of the outgoing waves of families $1$ and $2$. \par
Using Lemma \ref{Lemma-Interaction} and arguing as in \cite{Glimm65} (see also \cite{Bressan-Book-2000,DiPerna-1976}), we then obtain
\begin{equation} \nonumber
	 V(t_0 +)-V(t_0 -) \leq C_\delta |\sigma_{i}(t_0-)| |\sigma_{i+1}(t_0-)| e^{-\gamma x_i} \leq C_\delta |\sigma_{i}(t_0-)| |\sigma_{i+1}(t_0-)| ,
\end{equation}
while
\begin{equation} \nonumber
	Q(t_0+) - Q(t_0-)\leq  |\sigma_{i}(t_0-)| |\sigma_{i+1}(t_0-)| \left(-e^{-2 \gamma x_i}  + e^{-\gamma x_i} C_\delta V(t_0-) \right),
\end{equation}
where $V(t_0-), V(t_0+), Q(t_0-), Q(t_0+)$ denote the respective limits of $V(t_0-\tau)$, $V(t_0 + \tau)$, $Q(t_0- \tau)$ and $ Q(t_0 + \tau)$ as $\tau$ goes to $0^{+}$. \par
In particular, provided that
\begin{equation} \label{Condition-V-t-0Case2}
2 C_\delta e^{2 \gamma L} V(t_0-) \leq 1,
\end{equation}
we have
\begin{equation} \label{Decay-Q-inside}
Q(t_0+) - Q(t_0-)\leq  - \frac{1}{2} e^{-2 \gamma L} |\sigma_{i}(t_0-)| |\sigma_{i+1}(t_0-)|,
\end{equation}
and
\begin{equation}
	\label{Decay-Case2}
V(t_0+) + c_0 Q(t_0+) \leq V(t_0-) + c_0 Q(t_0-),
\end{equation}
for the choice
\begin{equation} \nonumber
c_0 = 2 C_\delta e^{2 \gamma L}.
\end{equation}
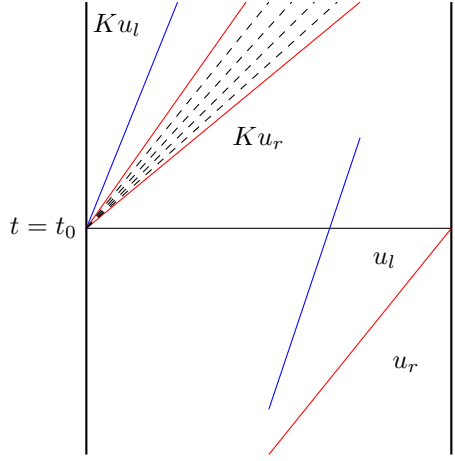
\begin{figure}[ht]  \label{fig-case-3}
\begin{center}
\begin{tikzpicture}[scale = 0.6]
    \draw [thick] (0,1) -- (0,11);
    \draw [thick] (8,1) -- (8,11);
    \draw (0,6) -- (8,6);

    \node [left] at (0,6) {$t=t_0$};

    \draw [red] (4,1) -- (8,6);
    \draw [blue] (0,6) -- (2,11);
    \draw [blue] (4,2) -- (6,8);
    \draw [red] (0,6) -- (6,11);

    \draw [red] (0,6) -- (3.5,11);
    \draw [dashed] (0,6) -- (4,11);
    \draw [dashed] (0,6) -- (4.5,11);
    \draw [dashed] (0,6) -- (5,11);
    \draw [dashed] (0,6) -- (5.5,11);

    \node [left] at (7,5.3) {$u_l$};
    \node [right] at (6.5,3) {$u_r$};
    \node [left] at (1.4,10.5) {$K u_l$};
    \node [right] at (3,8) {$K u_r$};

\end{tikzpicture}
\end{center}
\caption{Case 3: A front hitting the boundary at $t=t_0$.}
\end{figure}
\noindent\textbf{Case 3.} Here we suppose that one front hits the boundary at time $t = t_0$, $x = L$. The following lemma helps us describe the fronts created in $x= 0$.
\begin{lemma} \label{Lemma-Boundary}
Assume that a $k$ front of strength $\hat \sigma$ hits the boundary at $t = t_0-$, i.e.
\begin{equation} \label{Cond-u-r-u-l}
u_{R} = \Psi_k(\hat \sigma, u_{L}).
\end{equation}
Then it generates at $t = t_0+$ and $x = 0$ two waves at $x = 0$ so that
\begin{equation} \label{Def-sigma-boundary}
K u_{R} =  \Psi_2(\sigma_{2}, \Psi_1(\sigma_{1}, K u_{L})),
\end{equation}
with
\begin{equation} \label{Est-sigma-1-2-boundary}
|\sigma_1 - \hat \sigma  \ell_1(K u_L) \cdot K r_k(u_L) | + |\sigma_2 -\hat \sigma  \ell_2(K u_L) \cdot K r_k(u_L) | \leq C_\delta |\hat \sigma|^2.
\end{equation}
\end{lemma}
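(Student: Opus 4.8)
The plan is to read off the outgoing strengths $\sigma_1,\sigma_2$ by inverting the composite Lax--curve map appearing in \eqref{Def-sigma-boundary} and Taylor-expanding it to second order. First I would record the first-order expansion of a single Lax curve. Since $(\sigma,u)\mapsto\Psi_k(\sigma,u)$ is of class $C^2$ with $\partial_\sigma\Psi_k(0,u)=r_k(u)$, the hypothesis \eqref{Cond-u-r-u-l} gives
$$
u_R=\Psi_k(\hat\sigma,u_L)=u_L+\hat\sigma\,r_k(u_L)+O(\hat\sigma^2),
$$
the remainder being uniform for $|u_L|\le\delta$ thanks to the uniform $C^2$ bounds on $\Psi_k$ over the compact ball $\{|u|\le\delta\}$. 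Applying the linear map $K$ then yields
$$
K u_R=K u_L+\hat\sigma\,K r_k(u_L)+O(\hat\sigma^2),\qquad |Ku_R-Ku_L|=O(\hat\sigma).
$$

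Next I would analyze the map $\Phi(\sigma_1,\sigma_2):=\Psi_2(\sigma_2,\Psi_1(\sigma_1,Ku_L))$, which is exactly the right-hand side of \eqref{Def-sigma-boundary}. It is $C^2$ near the origin, with $\Phi(0,0)=Ku_L$ and differential at the origin given by the matrix whose columns are $r_1(Ku_L)$ and $r_2(Ku_L)$; by strict hyperbolicity these are linearly independent, so $\Phi$ is a local $C^2$-diffeomorphism and its inverse $\Phi^{-1}$ is $C^2$ near $Ku_L$, with $D\Phi^{-1}(Ku_L)$ equal to the matrix whose rows are $\ell_1(Ku_L)$ and $\ell_2(Ku_L)$, by the biorthogonality \eqref{Left-Eigenvector}. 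The solvability of the boundary Riemann problem under the positivity assumption \eqref{PositiveVel} (Section~\ref{SubsecBRP}) guarantees that the pair $(\sigma_1,\sigma_2)$ solving \eqref{Def-sigma-boundary} exists and is $O(\hat\sigma)$-small, so that $(\sigma_1,\sigma_2)=\Phi^{-1}(Ku_R)$.

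Then I would conclude by a first-order Taylor expansion of $\Phi^{-1}$ about $Ku_L$, whose second-order remainder is controlled by the $C^2$ bound on $\Phi^{-1}$:
$$
\begin{pmatrix}\sigma_1\\\sigma_2\end{pmatrix}
=\Phi^{-1}(Ku_R)
=D\Phi^{-1}(Ku_L)\,(Ku_R-Ku_L)+O\big(|Ku_R-Ku_L|^2\big).
$$
Substituting the expansion of $Ku_R$ above and using $|Ku_R-Ku_L|=O(\hat\sigma)$, the quadratic remainder becomes $O(\hat\sigma^2)$; reading off the two components then gives $\sigma_j=\hat\sigma\,\ell_j(Ku_L)\cdot K r_k(u_L)+O(\hat\sigma^2)$ for $j=1,2$, which is precisely \eqref{Est-sigma-1-2-boundary}.

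I expect the only genuine subtlety to be the uniformity of the estimate in $u_L$: the constant $C_\delta$ comes entirely from the second-order remainders, so one must ensure that the $C^2$ norms of the Lax curves and of $\Phi^{-1}$ stay bounded and that the differential of $\Phi$ remains uniformly invertible as $u_L$ ranges over $\{|u_L|\le\delta\}$. This follows from continuity and compactness, using that $r_k$ and $\ell_k$ are continuous and stay uniformly linearly independent by strict hyperbolicity; no difficulty arises beyond this bookkeeping.
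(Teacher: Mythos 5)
Your proposal is correct and follows essentially the same route as the paper: the paper applies the implicit function theorem to $g(\sigma,\sigma_1,\sigma_2)=K\Psi_k(\sigma,u_L)-\Psi_2(\sigma_2,\Psi_1(\sigma_1,Ku_L))$ to obtain a $C^2$ map $\sigma\mapsto(\sigma_1,\sigma_2)=\Phi(\sigma)$ and computes $\Phi'(0)$ by the same biorthogonality argument, which is just a repackaging of your inversion of the composite Lax-curve map followed by the first-order expansion of $Ku_R$. Your remark on the uniformity of the $C^2$ remainders in $u_L$ over $\{|u_L|\le\delta\}$ is a point the paper leaves implicit, but both arguments yield \eqref{Est-sigma-1-2-boundary} identically.
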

\begin{proof}
Since the front tracking solution  satisfies the boundary condition so at $t=t_0+$, it will generate the following Riemann problem from the left boundary
\begin{equation*}
u(t,0) = \left\{ \begin{array}{llll}
K u_L\  \mbox{for} \ t=t_0+,  \\
K u_R \ \mbox{for} \ t=t_0+.
\end{array} \right.
 \end{equation*}
Lemma \ref{Lemma-Boundary} amounts to get estimates on $\sigma_1$, $\sigma_2$ in \eqref{Def-sigma-boundary} from $\hat \sigma$ in \eqref{Cond-u-r-u-l}. \par
In order to do that, we consider the map
\begin{equation*}
g(\sigma, \sigma_1, \sigma_2) = K \Psi_k(\sigma, u_L) - \Psi_2( \sigma_2, \Psi_1 (\sigma_1, K u_L)),
\end{equation*}
defined for $\sigma, \sigma_1, \sigma_2$ in a neighborhood of $0$. Thanks to the $C^2$ regularity of Lax's curves, this map is $C^2$. Besides,
\begin{equation*}
\partial_{\sigma_1} g(0,0, 0)  = -r_1(K u_L),	\quad	\partial_{\sigma_2} g(0,0, 0)  = -r_2(K u_L).
\end{equation*}
As the eigenvectors $r_1(Ku_L)$ and $r_2(Ku_L)$ are linearly independent, by the implicit function theorem there exists a $C^2$ function $\Phi$ such that in a neighborhood of $0$,
\begin{equation*}
g(\sigma, \sigma_1, \sigma_2) = 0 \ \hbox{ if and only if } \ (\sigma_1, \sigma_2) = \Phi(\sigma).
\end{equation*}
Besides, close to $\sigma = 0$, we have the estimate
\begin{equation*}
| \sigma K r_k(u_L) - \sigma \Phi_2'(0) r_2(K u_L) -\sigma \Phi_1'(0) r_1(K u_L)| \leq C_\delta |\sigma|^2, 
\end{equation*}
so that we immediately get
\begin{equation*}
\Phi_1'(0) = \ell_1(K u_L)  \cdot K r_k (u_L) , \quad \Phi_2'(0) =   \ell_2(K u_L) \cdot K r_k (u_L) .
\end{equation*}
As $\Phi(0) = 0$, we deduce \eqref{Est-sigma-1-2-boundary} for $\hat \sigma$ small enough.
\end{proof}	
Now according to \eqref{Feedback-Condition}, in the situation of Lemma \ref{Lemma-Boundary}, we obtain
\begin{equation} \label{interaction1}
|\sigma_1| + |\sigma_2| \leq  |\hat \sigma| (e^{-\gamma L} - \varepsilon),
\end{equation}
so that
\begin{equation} \nonumber
V(t_0+) - V(t_0-) \leq - \varepsilon |\hat \sigma|.
\end{equation}
Besides, one easily gets that
\begin{equation*}
Q(t_0+ ) - Q(t_0- ) \leq V(t_0-) (|\sigma_1| + |\sigma_2|) \leq V(t_0-) |\hat \sigma|.
\end{equation*}
It follows that
\begin{equation}
	\label{Estimate-Case3}
(V(t_0+) + c_0 Q(t_0+)) - (V(t_0-) + c_0 Q(t_0-)) \leq |\hat \sigma| (- \varepsilon + c_0 V(t_0-)).
\end{equation}
\noindent
{\bf 2. Decay of the functional $\mathbf{J(t) = V(t) + c_0 Q(t)}$.} \par
Let us now prove that if we assume
\begin{equation} \label{Init-Smallness}
J(0)  \leq \min\left\{ \varepsilon, \frac{1}{2C_\delta e^{2 \gamma L} } \right\},
\end{equation}
then the functional $J$ is exponentially decaying on $[0,T_h^*)$. \par
Indeed, suppose that \eqref{Init-Smallness} satisfied. We denote by $t_1 = 0$ and $\{t_k\}_{k=1}^\infty$ the increasing sequence of times such that at $t=t_k (k\geq 2)$, two fronts interact or some fronts hits the right boundary. We have proved the following:
\begin{itemize}
	\item from Case 1 and estimate \eqref{V-Q-Case1}:  for all $t \in (t_k, t_{k+1})$, one has $J(t) \leq  e^{-c_* \gamma (t- t_k)} J(t_k)$;
	\item if two fronts interact at some time $t_k$ corresponding to Case 2, then from estimates \eqref{Decay-Case2}, $J(t_k+) \leq J(t_k-)$, provided \eqref{Condition-V-t-0Case2}, i.e. $2C_\delta e^{2 \gamma L}V(t_k-)\leq 1$;
	\item if one front hits the boundary, from Case 3 and estimate \eqref{Estimate-Case3}, then $J(t_k+) \leq J(t_k-)$ provided $V(t_k-) \leq \varepsilon$.
\end{itemize}
Therefore, if $J(0)$ satisfies \eqref{Init-Smallness}, which can be ensured by choosing $V(0)$ small enough since
\begin{equation*}
J(0) \leq V(0) + c_0 V(0)^2,
\end{equation*}
then $J$ decreases and $V = V(t)$ always satisfies
\begin{equation*}
V(t) \leq \min\left\{ \varepsilon, \frac{1}{2C_\delta e^{2 \gamma L}}\right\}.
\end{equation*}
Combining the above estimates, we thus obtain
\begin{equation} \label{Exp-Decay-J-t-*}		
J(t) \leq e^{-c_* \gamma t} J(0), \quad \hbox{ for all } t \in (0,T_h^*).
\end{equation}
This concludes the proof of Lemma \ref{Lem-Lyapunov}.
\end{proof}
%
%
%
%
%
%
%
\subsection{Global definiteness of the approximations}\label{Subsec-T-infinity}
In this subsection, we prove the following.
\begin{lemma}
	\label{Lemma-T-Infinity}
	There exists $\varepsilon_1 \in (0, \varepsilon_0]$, where $\varepsilon_0$ is the constant introduced in Lemma \ref{Lem-Lyapunov}, such that for small $h>0$, the front-tracking approximation $u_{h}$ starting from an initial data $u_{0,h}$ satisfying 
	\begin{equation} \label{Init-V-small-1}		
	V(u_{0,h}) \leq \varepsilon_1, 
	\end{equation}
	is defined globally in time, that is, $T^*_h = \infty$.
\end{lemma}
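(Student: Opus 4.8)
**Proof plan for Lemma 3.5 (Global definiteness, $T_h^* = \infty$).**

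The plan is to show that $T_h^*$ can fail to be infinite only for one of two reasons recorded in the definition \eqref{DefT*}: either the number of fronts blows up in finite time, or the $L^\infty$ norm of $u_h$ reaches the threshold $\delta_0$. I would rule out both by exploiting the exponential decay of $J$ established in Lemma \ref{Lem-Lyapunov}. First I would fix $\varepsilon_1 \in (0,\varepsilon_0]$ small enough that \eqref{Init-V-small-1} forces $J(0) \leq V(0) + c_0 V(0)^2$ to satisfy the smallness hypothesis \eqref{Init-Smallness}; this guarantees that on all of $[0,T_h^*)$ we have the decay \eqref{Exp-Decay-J-t-*}, and in particular $V(t) \leq J(t) \leq J(0)$ stays uniformly small. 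Since $TV^*(u_h(t))$ is controlled by $V(t)$ (the strengths $\sigma_{i,k}$ are comparable to the jumps via \eqref{ConnectingU-U+}), Lemma \ref{Lem-Norm} then yields a uniform bound $\|u_h(t)\|_{L^\infty} \leq C\, TV^*(u_h(t)) \leq \delta_0/2$, provided $\varepsilon_1$ is chosen small enough. This strictly beats the threshold $\delta_0$, so the $L^\infty$ constraint in \eqref{DefT*} can never be saturated as $t \to T_h^*$.

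The main obstacle is the finiteness of the number of fronts, which is the genuinely delicate point in any front-tracking argument. The danger is that interactions could accumulate and produce infinitely many fronts in finite time. Here I would argue by a bookkeeping/monotonicity scheme in the spirit of Bressan \cite{Bressan-Book-2000}: each interior interaction (Case 2) strictly decreases the quadratic functional $Q$ by a definite amount, as quantified in \eqref{Decay-Q-inside}, namely by at least $\tfrac12 e^{-2\gamma L}|\sigma_i||\sigma_{i+1}|$, while $V$ can only increase by the much smaller amount $C_\delta |\sigma_i||\sigma_{i+1}|$. The boundary interactions (Case 3) strictly decrease $V$ by $\varepsilon|\hat\sigma|$. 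Since $J = V + c_0 Q$ is bounded below by $0$ and decreasing, and since each interaction of two fronts produces a quantified decay in $J$, only finitely many interactions involving fronts of strength bounded below can occur. The remaining work is to control the creation of new fronts: rarefaction fans are split only at their creation and never re-split (by the convention in Section \ref{Sec-Construction}), distinct-family interactions produce a bounded number of outgoing fronts, and same-family interactions do not increase the count of large fronts.

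To make this rigorous I would track two integer quantities along the construction: the total number of fronts and the total number of interaction points, both as functions of time. The key estimate is that the total decrease of $J$ over $[0,T_h^*)$ is bounded by $J(0) \leq \varepsilon_1$, so summing the per-interaction decays shows the number of ``effective'' interactions (those between two fronts of non-negligible strength, relative to the fan-splitting scale $h$) is finite. Because each front has speed at least $c_*$ by \eqref{PositiveVelocities-c}, fronts travel monotonically rightward and exit through $x=L$ in time at most $L/c_*$, being replaced at $x=0$ by at most two fronts whose total strength is strictly contracted by the feedback factor $e^{-\gamma L}-\varepsilon < 1$ from \eqref{Feedback-Condition}. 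This contraction at the boundary is what prevents the number of fronts from growing without bound over long times: the total incoming strength decays geometrically in the number of boundary crossings. Combining the finiteness of interior interactions with the geometric decay of boundary-generated strength, I conclude that on any finite time interval $[0,T]$ the number of fronts stays finite, which together with the $L^\infty$ bound above shows $T_h^* = \infty$.
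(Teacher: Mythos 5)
Your overall strategy coincides with the paper's: the $L^\infty$ threshold in \eqref{DefT*} is ruled out exactly as you describe (decay of $J$, equivalence of $V$ and $TV^*_{[0,L]}$, and Lemma \ref{Lem-Norm}), and the finiteness of front-creating interior interactions is obtained, as in the paper, from the quantified drop \eqref{Decay-Q-inside}: an interior interaction increases the number of fronts only when the newly created wave of the other family has strength exceeding $h$, which by Lemma \ref{Lemma-Interaction} forces $|\tilde\sigma|\,|\hat\sigma|\geq h/C$ and hence costs at least a fixed multiple of $h$ out of the finite budget $J(0)$.

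There is, however, a genuine gap in your treatment of the boundary. You bound the number of boundary hits (and hence of boundary-created fronts) by appealing to the geometric contraction of the \emph{total strength} under reflection at $x=L$. This does control $\sum_j|\hat\sigma_j|$ over all boundary hits (Case 3 already gives $V(t+)-V(t-)\leq-\varepsilon|\hat\sigma|$, so this sum is at most of order $J(0)/\varepsilon$), but it does not bound their \emph{number}: fronts of arbitrarily small strength are produced at same-family interactions (the outgoing wave of the other family has strength $O\bigl(|\tilde\sigma|\,|\hat\sigma|(|\tilde\sigma|+|\hat\sigma|)\bigr)$, which can be far below $h$), and nothing in a strength budget prevents infinitely many such weak fronts from reaching $x=L$ in finite time. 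The paper closes this with a finite-speed-of-propagation induction that your proposal lacks: setting $M>\max_{|u|\le\delta}\lambda_2(u)$ and $T_M=L/M$, any front emanating from $x=0$ after time $t_0$ needs time at least $T_M$ to reach $x=L$, so the fronts hitting the boundary during $[t_0,t_0+T_M]$ all lie in the domain $D(t_0)=\{M(t-t_0)\leq x\}$ and are descendants only of the fronts present at time $t_0$; these are finite in number by the $\tilde Q$-decay argument applied inside $D(t_0)$. This yields finitely many boundary hits per window, hence a bounded increase of $\tilde V+c_0\tilde Q$ and finitely many newly created fronts on $[t_0,t_0+T_M]$, and one iterates over the windows $[jT_M,(j+1)T_M]$. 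Note also that a boundary hit does not produce ``at most two fronts'': the outgoing rarefactions at $x=0$ are re-split into fans, so it produces $O(1+|\hat\sigma|/h)$ fronts --- still finite per hit, but your count should be adjusted accordingly.
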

\begin{proof}
We argue similarly as in \cite[Section 7.3, item 4]{Bressan-Book-2000}. \par
First, we notice that $TV^*(\cdot)$ in \eqref{total-var-*} and $V$ are equivalent in the sense that for come $C>0$,
\begin{equation} \label{VTV*Equiv}
C^{-1} \, V(U) \leq TV^{*}(U) \leq C \, V(U),
\end{equation}
for all piecewise constant functions $U$ on $[0,L]$ with suitably small $BV$ norm. From the estimate \eqref{Norm-BV-L-infty}, it is clear that if $u_{0,h}$ satisfies \eqref{Init-V-small-1} for some $\varepsilon_1 \in (0, \varepsilon_0]$ small enough, $\norm{u_{0,h}}_{L^\infty(0,L)} \leq \delta_0$, so that $T_h^*>0$. Then, using the decay estimates \eqref{Exp-Decay-J} and estimates \eqref{Norm-BV-L-infty} and \eqref{VTV*Equiv}, taking $\varepsilon_1 \in (0 , \varepsilon_0]$ smaller if necessary, $J(0)$ can be made small enough to obtain $\norm{u_h(t)}_{L^\infty(0,L)} \leq \delta_0/2$ for all $t \in (0,T_h^*)$. \par
Therefore, the only reason for $T_h^*$ to be finite is the possible accumulation of interaction times. Let us show that this cannot happen. 
Set 
\begin{equation} \label{DefM}
M > \max_{|u|\leq \delta} \lambda_2(u),
\end{equation}
and $T_M = L /M$, and define the domain
\begin{equation*}
D(t_0) = \{ (t,x) \in [0, t_0 + T_M] \times [0,L], \hbox{ with }  M (t-t_0) \leq x \}.
\end{equation*}

Let us first show that in $D(0)$, there can be only finitely many occurrences of interactions from which more than two fronts exit. 
Indeed, the only fronts interacting in $D(0)$ are the ones coming from $t = 0$, $x \in (0,L)$ and their descendants. By that, we mean that tracing a front from this zone backward in time across interactions, we end up always at a point in $(0,L)$ at $t=0$, not at the boundary. \par
Let us then consider the following functionals defined on piecewise constant functions $U$ with suitably small $BV$ norm:
\begin{eqnarray} \label{Linear-Functional-tilde}
\tilde V(U,X) & = & \sum_{\substack{i \in \{0,\cdots, n\}\\ x_i \geq X}}^n \left( |\sigma_{i,1}| + |\sigma_{i,2}| \right) , \\
\label{Quadratic-Functional-tilde}
 \tilde Q(U,X) & = &
 \sum_{\substack{(x_i, \sigma_i) \\ x_i \geq X}}\Bigg(  |\sigma_{i}| \sum_{\substack{(x_j,\sigma_j) \hbox{\scriptsize \, approaching }\\ \hbox{\scriptsize \, and }x_j \geq X}} |\sigma_{j}| \Bigg).
\end{eqnarray}
When more than two fronts exit from an interaction point in the domain $D(0)$, we necessarily are in the situation of two interacting fronts of the same family and, thanks to Lemma~\ref{Lemma-Interaction}, their strength $\tilde \sigma$ and $\hat \sigma$ satisfy $|\tilde \sigma| |\hat \sigma| \geq h$ (for small $h$). But by \eqref{Decay-Q-inside}, this induces a decay of the functional $\tilde Q$ of $- h/2$ at least, while similar estimates as before show that for all $t \in (0,T_M)$,
\begin{equation*}
\tilde V(u_h(t),Mt) + c_0 \tilde Q(u_h(t),Mt) \leq e^{2 \gamma L}( V(0) + c_0 Q(0)).
\end{equation*}
(The only difference is that now, fronts can moreover leave the domain $D(0)$.) Therefore, there can be only a finite number of interactions from which more than two fronts exit.
Consequently, the number of fronts is finite in $D(0)$. \par
Accordingly, the number of fronts hitting the boundary $x = L$ during the time interval $[0,T_M]$ is finite. We now show that this implies that the number of newly created fronts in the set $[0,T_M] \times [0,L]$ is finite. Indeed, the functional $t\mapsto \tilde J(t) = \tilde V(u_h(t),0) + c_0 \tilde Q(u_h(t),0)$ satisfies the following properties:
\begin{itemize}
\item When no front hits the boundary and fronts do not meet in the domain, the functional $\tilde J$ is constant.
\item When a front hits the boundary and additional fronts (in finite number) are possibly created, the functional $\tilde J$ may increase. As this occurs a finite number of times, the increase of $\tilde J$ on $[0,T_M]$ is necessarily bounded.
\item When two fronts meet in the domain $(0,L)$, $\tilde J$ decreases. Besides, when more than two fronts issue from the interaction, $\tilde J$ decays from $-h/2$.
\end{itemize}
Consequently, there can be only a finite number of newly created fronts in the time interval $[0,T_M]$.
We can then iterate this argument on the time intervals of the form $[jT_M, (j+1)T_M]$ for $j \in \N$, from which we conclude $T_h^* = \infty$. 
\end{proof}
\subsection{Size of the rarefaction fronts}\label{Subsec-Rarefaction}
In this subsection, the term ``rarefaction front'' denotes any piecewise $C^1$ trajectory $t \mapsto x(t)$ corresponding to a rarefaction discontinuity. 
The goal of this section is to prove that the rarefaction fronts stay of strength ${\mathcal O}(h)$. This is central in the proof of the entropy inequality in the limit $h \to 0^+$. \par
\begin{lemma}
Under the assumption \eqref{Init-V-small-1}, there exists $C>0$ independent of $h$ and of the number of discontinuities in $u_{0,h}$ such that all the rarefactions fronts in the front-tracking approximations $u_{h}$ satisfy
\begin{equation} \label{Smallness-RS}
\sigma \leq C h.
\end{equation}
\end{lemma}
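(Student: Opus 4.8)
The plan is to follow the strength of one rarefaction front throughout its whole history and to show that, starting from its small value at birth, it can only be amplified by a bounded multiplicative factor. First I would invoke the conventions of the construction in Section~\ref{Sec-Construction}: a rarefaction \emph{front} is born either inside a rarefaction fan---and then, by the very definition of the fan ($p=\lceil \sigma/h\rceil$, so that each sub-front has strength $\sigma/p\le h$), with strength at most $h$---or as a single front produced at an interaction. Moreover, two rarefaction fronts of the same family never meet, and, for $\delta$ small, a shock cannot turn into a rarefaction: in a distinct-family interaction \eqref{Interacting-Fronts-1} gives $|\sigma_k-\hat\sigma_k|\le C_\delta|\hat\sigma_1||\hat\sigma_2|\le C_\delta\delta|\hat\sigma_k|$, so the outgoing $k$-wave keeps the sign of the incoming one. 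Consequently the ``rarefaction character'' of any rarefaction front can be traced backwards, through the single-front creations, along a single genealogical trajectory ending at a fan of strength $\le h$.

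Next I would establish the key per-interaction estimate: if $\mathcal F$ is a $k$-rarefaction front of strength $\sigma>0$ entering an interaction, then the outgoing $k$-rarefaction continuing it has strength $\sigma^+\le \sigma\,(1+C_\delta\, s)$, where $s$ is the strength of the other incoming front. Indeed, if $\mathcal F$ meets a front of the other family, \eqref{Interacting-Fronts-1} yields $\sigma^+\le\sigma(1+C_\delta|s|)$ together with $\sigma^+\ge\sigma(1-C_\delta|s|)>0$, so $\mathcal F$ stays a rarefaction; if $\mathcal F$ meets a front of its own family it can only meet a shock (strength $\hat\sigma<0$), and \eqref{Interacting-Fronts-2} gives $\sigma^+\le\sigma+\hat\sigma+C_\delta|\sigma||\hat\sigma|(|\sigma|+|\hat\sigma|)\le\sigma(1+2C_\delta\delta|\hat\sigma|)$ whenever the outgoing $k$-wave is still a rarefaction (otherwise $\mathcal F$ simply terminates). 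The wave of the other family created in this last case has cubically small strength and plays no role.

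Multiplying these estimates along the genealogy of a given rarefaction front, one obtains
\begin{equation*}
\sigma \;\le\; h\prod_j\bigl(1+C_\delta\, s_j\bigr)\;\le\; h\,\exp\Bigl(C_\delta\sum_j s_j\Bigr),
\end{equation*}
where the $s_j$ are the strengths of the fronts successively met along that trajectory. It then remains to bound $\sum_j s_j$ uniformly in $h$. Here I would argue that, by the ordering of the characteristic speeds, a front of the opposite family crosses $\mathcal F$ at most once (after crossing they separate for ever), while same-family shocks are absorbed at the meeting; hence $\sum_j s_j$ is at most the total strength of all waves ever present in the approximation (the initial ones plus those produced at interactions). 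This last quantity stays $\lesssim\varepsilon_0$ uniformly in $h$: the initial wave strength is controlled by $V(u_{0,h})\le\varepsilon_1$ through \eqref{Init-V-small-1}, while the strength created at interactions is controlled by the total decay of $Q$ furnished by \eqref{Decay-Q-inside} and Lemma~\ref{Lem-Lyapunov}, which is $\lesssim\varepsilon_0^2$. Thus $\sum_j s_j\le C\varepsilon_0$ and $\sigma\le h\,e^{C_\delta C\varepsilon_0}=:Ch$, with $C$ independent of $h$ and of the number of discontinuities of $u_{0,h}$.

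The main obstacle is precisely this last accumulation step. A naive bookkeeping based on the interaction potential alone only yields $\sigma\sum_j s_j\lesssim\varepsilon_0^2$, i.e. $\sum_j s_j\lesssim\varepsilon_0^2/\sigma$, hence a useless factor $\exp(C\varepsilon_0^2/h)$ that blows up as $h\to0^+$; the point is that the growth is genuinely \emph{multiplicative} (each estimate is proportional to $\sigma$ itself), so that what must be bounded is the total strength \emph{crossing} a single front, not the interaction potential. Making rigorous the facts that each opposite-family front meets $\mathcal F$ at most once and that the total created wave strength remains $\lesssim\varepsilon_0$ uniformly in $h$---so that $\sum_j s_j$ stays bounded---is the crux of the argument.
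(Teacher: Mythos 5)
Your overall strategy is exactly the paper's: follow one rarefaction front through its history, note that each crossing by a front of strength $s$ amplifies $\sigma$ by a factor at most $1+C_\delta s$ (while same-family encounters can only be with shocks and only decrease $\sigma$), and conclude $\sigma\le h\exp(C\sum_j s_j)$ with $\sum_j s_j$ the total strength crossing the tracked front. You also correctly identify the crux, namely that one must bound the \emph{crossing} strength, not the interaction potential. The paper implements this by showing that $t\mapsto\sigma(t)\exp\bigl(C(V_{(x,\sigma)}+c_0Q_{(x,\sigma)}+V_{(x,\sigma),b}+c_0Q_{(x,\sigma),b})\bigr)$ is non-increasing, where $V_{(x,\sigma)}$ is the total strength of fronts currently approaching the tracked one; each crossing costs $|\sigma_\alpha|$ to $V_{(x,\sigma)}$ while contributing $C_\delta\sigma|\sigma_\alpha|$ to $\sigma$, which is your telescoping in Lyapunov form.

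The genuine gap is in your bound on $\sum_j s_j$. You assert that an opposite-family front crosses $\mathcal F$ at most once (``after crossing they separate for ever'') and that the total wave strength ever created is the initial strength plus an $O(\varepsilon_0^2)$ amount controlled by the decay of $Q$. Both claims ignore the feedback boundary: a front that exits at $x=L$ is re-injected at $x=0$ via $u(t,0)=Ku(t,L)$, producing \emph{new} fronts of both families whose descendants can overtake and cross the tracked rarefaction again, in principle repeatedly. This re-injected strength is $O(\varepsilon_0)$, not $O(\varepsilon_0^2)$, and it is not controlled by the decay of $Q$ but by the strict dissipativity margin in \eqref{interaction1}, i.e.\ $|\sigma_1|+|\sigma_2|\le(e^{-\gamma L}-\varepsilon)|\hat\sigma|$ at each reflection. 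This is precisely why the paper introduces the additional functionals $V_{(x,\sigma),b}$ and $Q_{(x,\sigma),b}$, which book-keep the strength of fronts still able to reach $x=L$ and return, shows that each boundary reflection trades a loss of $|\hat\sigma|$ in $V_{(x,\sigma),b}$ against a gain of at most $(1-\varepsilon)|\hat\sigma|$ in $V_{(x,\sigma)}$ (so the combined functional still controls $\sigma$), and then iterates over finitely many ($\simeq M/c_*$) time windows of length $L/M$. Your argument has all the ingredients to absorb this (the $\varepsilon$ margin in \eqref{Feedback-Condition} gives $\sum_{\text{reflections}}|\hat\sigma|\lesssim J(0)/\varepsilon$), but as written the accounting of $\sum_j s_j$ omits the recycled fronts, and the ``crosses at most once'' claim is false across boundary reflections.
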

\begin{proof}
We argue similarly as in \cite[Section 7.3, item 5]{Bressan-Book-2000}. We thus consider a rarefaction front $(t, x(t))$ of strength $\sigma(t)>0$ and of family $k$ traveling in the domain. Let us say that it is created at some time $t_0 \geq 0$ with $\sigma(t_0) \leq h$ and then evolves according to the construction introduced in Section \ref{Sec-Construction} and then possibly ends on the right boundary. 
Due to the construction, a rarefaction front hitting the boundary can only generate new rarefaction fronts in the domain with a strength smaller than $h$, and we can therefore limit ourselves to rarefaction fronts until the first time they hit the boundary. As both speeds are strictly positive, we note that this exit time is necessarily limited by $t_0 + L/ c_* $. We denote this exit time by $t_0 + t_*$. \par
Now for such a rarefaction front there are two possibilities of interactions:
\begin{itemize}
\item It may interact with some fronts of the same family: in this case, as rarefaction fronts of the same family never interacts, it interacts necessarily with a shock of the same family. Then Glimm's interaction estimate in Lemma \ref{Lemma-Interaction}  shows that its strength becomes smaller.
\item It can meet fronts of the other family.
\end{itemize}
We will consequently focus on the second case. Our proof is based on an explicit bound on the maximal increase of the strength of a rarefaction front during a time interval of length $L/M$. A simple iteration argument implies then \eqref{Smallness-RS}. \par
We introduce $t_1$ such that
\begin{equation*}
	x(t_0) + M(t_1 -t_0)  = L,
\end{equation*}
where we recall that $M$ was defined in \eqref{DefM}. Next we define a quantity similar to $V$ (but following the rarefaction front that we consider) by
\begin{equation*}
V_{(x, \sigma)} (t) = \sum_{\substack{(x_i, \sigma_i) \hbox{\scriptsize \, approaching } (x(t), \sigma(t))\\ x_i > M (t - t_1)_+}} |\sigma_i|,
\end{equation*}
where the front $(x_i, \sigma_i)$ of family $k_i$ is said to be approaching the front $(x(t), \sigma(t))$ of family $k$ if one of the following cases occurs:
\begin{itemize}
	\item $k_i = k$ and $\sigma_i < 0$,
	\item $k_i < k$ and $x_i > x$,
	\item $k_i > k$ and $x_i < x$.
\end{itemize}
Concerning the fronts which may pass through the boundary, we introduce
$$
V_{(x, \sigma),b}(t) = \sum_{(x_i, \sigma_i) \hbox{\scriptsize \, with } x_i >x(t_0)+ M (t- t_0)} |\sigma_i|.
$$
Similarly, we introduce $Q_{(x,\sigma)}(t)$ and $Q_{(x,\sigma),b}(t)$
\begin{eqnarray*}
 Q_{(x,\sigma)} (t) & = &
 \sum_{\substack{ (x_i, \sigma_i) \hbox{\scriptsize \, approaching } (x(t), \sigma(t)) \\ x_i \geq M(t-t_1)_+}}  |\sigma_i| \Bigg(\sum_{x_j \geq M (t-t_1)_+ } |\sigma_{j}|    \Bigg), \\
 Q_{(x,\sigma),b} (t) & = &
 \sum_{\substack{ (x_i, \sigma_i)\\ x_i >x(t_0)+ M (t- t_0)}}  |\sigma_i|  \Bigg(\sum_{\substack{ (x_j,\sigma_j) \hbox{\scriptsize \, approaching } (x_i(t), \sigma_i(t))\\\hbox{\scriptsize \, with } x_j \geq x(t_0)+ M (t- t_0) }} |\sigma_{j}|   \Bigg).
\end{eqnarray*}
We now discuss the evolution of the quantities $\sigma$, $V_{(x,\sigma)}$, $V_{(x,\sigma),b}$, $Q_{(x,\sigma)}$, $Q_{(x,\sigma),b}$ as time evolves. We discuss the following four cases: 
\begin{itemize}
\item {\bf Case 1:} There are no interactions at time $t$.
\item {\bf Case 2:} At time $t$, there is an interaction inside $(0,L)$ that do not involve the rarefaction front $(x(t), \sigma(t))$.
\item {\bf Case 3:} At time $t$, there is an interaction (inside $(0,L)$) involving the rarefaction front $(x(t), \sigma(t))$.
\item {\bf Case 4:} At time $t$, a front hits the boundary. 
\end{itemize}
\noindent
{\bf Case 1.} When there is no interaction at time $t$, the quantities $\sigma$, $V_{(x,\sigma)}$, $V_{(x,\sigma),b}$, $Q_{(x,\sigma)}$, $Q_{(x,\sigma),b}$ cannot increase:
\begin{equation} \label{Case1}
\left\{ \begin{array}{l}
	\ds \sigma(t+) = \sigma(t-), \\
	\ds V_{(x,\sigma)}(t+) + c_0 Q_{(x,\sigma)}(t+) \leq V_{(x,\sigma)} (t-) + c_0 Q_{(x,\sigma)}(t-), \\
	\ds V_{(x,\sigma),b}(t+) + c_0 Q_{(x,\sigma),b}(t+) \leq V_{(x,\sigma),b}(t-) + c_0 Q_{(x,\sigma),b}(t-).
\end{array} \right.
\end{equation}		
\noindent
{\bf Case 2.} At times $t \in [t_0, t_0 +\min\{ L/M, t_*\}]$ corresponding to interactions inside $(0,L)$ that do not involve the rarefaction front $(t,x(t), \sigma(t))$, as in \cite[Section 7.3, item 5]{Bressan-Book-2000}, we obtain from Lemma \ref{Lemma-Interaction} that
\begin{equation} \label{Case2}
\left\{ \begin{array}{l}
	\ds \sigma(t+) = \sigma(t-), \\
	\ds V_{(x,\sigma)}(t+) + c_0 Q_{(x,\sigma)}(t+) \leq V_{(x,\sigma)} (t-) + c_0 Q_{(x,\sigma)}(t-), \\
	\ds V_{(x,\sigma),b}(t+) + c_0 Q_{(x,\sigma),b}(t+) \leq V_{(x,\sigma),b}(t-) + c_0 Q_{(x,\sigma),b}(t-).
\end{array} \right.
\end{equation}	
\noindent
{\bf Case 3.} At times $t \in [t_0, t_0 +\min\{ L/M, t_*\}]$ corresponding to an interaction inside $(0,L)$ involving the rarefaction front $(t,x(t),\sigma(t))$ with a front of strength $\sigma_\alpha$, from Lemma \ref{Lemma-Interaction},
\begin{equation} \label{Case3}
\left\{ \begin{array}{l}
	\ds  \sigma(t+) \leq \sigma (t-) + C_\delta |\sigma(t-)| |\sigma_\alpha|, \\
	\ds V_{(x,\sigma)}(t+) \leq V_{(x,\sigma)} (t-) - |\sigma_\alpha|, \\
	\ds  Q_{(x,\sigma)}(t+) \leq Q_{(x,\sigma)}(t-), \\
	\ds V_{(x,\sigma),b}(t+) + c_0 Q_{(x,\sigma),b}(t+) \leq V_{(x,\sigma),b}(t-) + c_0 Q_{(x,\sigma),b}(t-).
\end{array} \right.
\end{equation}
\noindent
{\bf Case 4.} At times $t \in [t_0, t_0 +\min\{ L/M, t_*\}]$ corresponding to a front hitting the boundary of strength $\hat \sigma$, from Lemma \ref{Lemma-Boundary},
\begin{equation} \label{Case4}
\left\{ \begin{array}{l}
\ds  \sigma(t+) = \sigma (t-) , \\
\ds V_{(x,\sigma)}(t+) + c_0 Q_{(x,\sigma)}(t+) \leq V_{(x,\sigma)} (t-) + c_0 Q_{(x,\sigma)}(t-) + (1- \varepsilon)|\hat \sigma|,  \\
\ds V_{(x,\sigma),b}(t+) + c_0 Q_{(x,\sigma),b}(t+) \leq V_{(x,\sigma),b}(t-) + c_0 Q_{(x,\sigma),b}(t-)-  |\hat \sigma|.
\end{array} \right.
\end{equation}	
In particular, relying on the estimates \eqref{Case1}--\eqref{Case4}, there exists some constant $C$ large enough such that 
$$
	t \mapsto \sigma(t) \exp\left( C (V_{(x,\sigma)}(t) + c_0 Q_{(x,\sigma)}(t) + V_{(x,\sigma),b}(t) + c_0 Q_{(x,\sigma),b}(t)) 	\right),
$$
is non-increasing on $[t_0,t_0 +\min\{ L/M, t_*\}]$. Introducing $J$ defined in \eqref{Def-J} and $\tilde V$, $\tilde Q$ defined in \eqref{Linear-Functional-tilde}--\eqref{Quadratic-Functional-tilde}, we immediately get that for all $t \in [t_0, t_0 +\min\{ L/M, t_*\}]$, 
$$
	V_{(x,\sigma)}(t) + c_0 Q_{(x,\sigma)}(t) + V_{(x,\sigma),b}(t) + c_0 Q_{(x,\sigma),b}(t) 
	\leq 
	\tilde V(u_h(t),0) + c_0\tilde Q(u_h(t),0) 
	\leq 
	e^{2 \gamma L} J(t).
$$
Therefore, the decay property \eqref{Exp-Decay-J} yields, for all $t \in [t_0, t_0 +\min\{ L/M, t_*\}]$,
$$
	\sigma(t) \leq \sigma(t_0) \exp( C J(0)).
$$
Of course, this estimate on the maximal amplification of $\sigma(t)$ stays valid in each interval of time of length $L/M$. As $t_*$ is necessarily bounded by $L/ c_*$, iterating this estimate at worst a finite number of times $\simeq M/c_*$, we get that for all $t$ during which the rarefaction exists, $\sigma(t)$ always satisfies	
$$
	\sigma(t) \leq  \sigma(t_0) \exp(C (V(0) + c_0 (V(0))^2) \leq C h,
$$
i.e. the last estimate \eqref{Smallness-RS} in Lemma \ref{Lemma-Interaction}.
\end{proof}
As a corollary of Lemma \ref{Lem-Lyapunov}, we have the following result.
\begin{corollary}[Bounds on the total variation] \label{cor-decay-bounds}
Under the setting of Theorem \ref{Thm-Main}, there exist positive constants $\varepsilon_2 \in (0, \varepsilon_1]$, $C>0$ and $\nu >0$ such that for any $h>0$, if $u_{0,h}$ is a piecewise constant function satisfying $TV^*_{[0,L]}(u_{0,h}) \leq \varepsilon_2$, if we define the approximate solution $u_h(t)$ of \eqref{System-U}--\eqref{BoundaryConditions} using the front tracking approximation explained above, for all $t \geq 0$,
\begin{equation} \label{Decay-BV-norm-h}
TV^*_{[0,L]}(u_h(t)) \leq C e^{- \nu t} TV^*_{[0,L]}(u_{0,h}).
\end{equation}
In particular, if we choose $u_{0,h}$ satisfying \eqref{Conv-u-0-h}, we get the uniform estimate:
\begin{equation} \label{Decay-BV-norm-h-2}
TV^*_{[0,L]}(u_h(t)) \leq C e^{- \nu t} TV^*_{[0,L]}(u_{0}).
\end{equation}
\end{corollary}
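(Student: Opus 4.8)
The plan is to assemble the exponential decay of the Lyapunov functional $J$ from Lemma~\ref{Lem-Lyapunov}, the global definiteness $T_h^* = \infty$ from Lemma~\ref{Lemma-T-Infinity}, and the equivalence \eqref{VTV*Equiv} between $V$ and $TV^*_{[0,L]}$, and then translate the estimate on $J$ back into an estimate on $TV^*_{[0,L]}$.

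First I would fix the threshold $\varepsilon_2$. By \eqref{VTV*Equiv} there is a constant $C$ such that $V(U) \leq C\, TV^*_{[0,L]}(U)$ for every piecewise constant $U$ of suitably small BV norm; choosing $\varepsilon_2 \in (0,\varepsilon_1]$ small enough that $C \varepsilon_2 \leq \varepsilon_1$ then guarantees that the hypothesis $TV^*_{[0,L]}(u_{0,h}) \leq \varepsilon_2$ forces $V(u_{0,h}) \leq \varepsilon_1$. With this, Lemma~\ref{Lemma-T-Infinity} ensures that the approximation $u_h$ is defined for all $t \geq 0$, and Lemma~\ref{Lem-Lyapunov} yields $J(t) \leq e^{-\nu t} J(0)$ for all $t \geq 0$, where $J$ is the functional defined in \eqref{Def-J}.

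Next I would sandwich $TV^*_{[0,L]}$ between $V$ and $J$. Since $Q \geq 0$ and $c_0 > 0$, the equivalence \eqref{VTV*Equiv} gives $TV^*_{[0,L]}(u_h(t)) \leq C\, V(u_h(t)) \leq C\, J(t)$. At the initial time the interaction potential is controlled quadratically by the linear functional, $Q(u_{0,h}) \leq (V(u_{0,h}))^2$ (as already used in the proof of Lemma~\ref{Lem-Lyapunov}), so combining with $V(u_{0,h}) \leq C\, TV^*_{[0,L]}(u_{0,h})$ and the smallness $TV^*_{[0,L]}(u_{0,h}) \leq \varepsilon_2$ I obtain a \emph{linear} bound $J(0) = V(u_{0,h}) + c_0 Q(u_{0,h}) \leq C'\, TV^*_{[0,L]}(u_{0,h})$. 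Chaining these with the decay of $J$ produces $TV^*_{[0,L]}(u_h(t)) \leq C\, e^{-\nu t} J(0) \leq C''\, e^{-\nu t}\, TV^*_{[0,L]}(u_{0,h})$, which is exactly \eqref{Decay-BV-norm-h}.

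Finally, for the uniform bound \eqref{Decay-BV-norm-h-2}, I would invoke the properties \eqref{Conv-u-0-h} of the approximating sequence. The controls $TV(u_{0,h}) \leq TV(u_0)$ and $\|u_{0,h}\|_\infty \leq \|u_0\|_\infty$, together with the $L^1$-convergence $u_{0,h} \to u_0$, bound the boundary term $|K u_{0,h}(L-) - u_{0,h}(0+)|$ appearing in \eqref{total-var-*} and hence give $TV^*_{[0,L]}(u_{0,h}) \leq C\, TV^*_{[0,L]}(u_0)$ for $h$ small; substituting this into \eqref{Decay-BV-norm-h} yields \eqref{Decay-BV-norm-h-2}. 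I do not expect a genuine obstacle here, since the statement merely assembles results already established; the only points requiring care are the \emph{linear} (rather than quadratic) control of $J(0)$, which relies crucially on the smallness threshold $\varepsilon_2$, and the passage from $TV^*_{[0,L]}(u_{0,h})$ to $TV^*_{[0,L]}(u_0)$ through \eqref{Conv-u-0-h}.
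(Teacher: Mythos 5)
Your proposal is correct and follows essentially the same route as the paper: shrink $\varepsilon_2$ via the equivalence \eqref{VTV*Equiv} so that Lemma \ref{Lemma-T-Infinity} and Lemma \ref{Lem-Lyapunov} apply, then sandwich $TV^*_{[0,L]}(u_h(t)) \leq C\,V(t) \leq C\,J(t) \leq C e^{-\nu t} J(0)$ and use the linear bound $J(0) \leq (1+c_0)V(0)$ coming from $Q(0)\leq V(0)^2$ and smallness. Your added remark on controlling the boundary term in $TV^*_{[0,L]}(u_{0,h})$ for \eqref{Decay-BV-norm-h-2} is a reasonable elaboration of a step the paper leaves implicit.
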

\begin{proof}
We recall that the quantities $TV^*_{[0,L]}(\cdot)$ and $V(\cdot)$ are equivalent for $u_h$ satisfying $\norm{u_h}_{L^\infty(0,L)} \leq \delta$, see \eqref{VTV*Equiv}. Therefore, taking $\varepsilon_2>0$ small enough to guarantee \eqref{Init-V-small-1}, the front tracking approximation $u_h$ is well-defined for all times according to Lemma \ref{Lemma-T-Infinity}, and we have the decay \eqref{Exp-Decay-J}, which yields
\begin{equation*}
V(t) \leq J(t) \quad \hbox{ and } \quad J(0) \leq V(0) + c_0 V(0)^2 \leq V(0) ( 1+ c_0),
\end{equation*}
where we assumed in the last identity that $V(0) \leq 1$, which can be done by taking $\varepsilon_2>0$ small enough. This obviously implies \eqref{Decay-BV-norm-h}.
\end{proof}
%
%
%
%
%
%
\subsection{Lipschitz in time estimates}\label{Subsec-TimeLips}
In order to pass to the limit in the boundary conditions \eqref{BoundaryConditions}, we will also estimate the norm of the approximations in $W^{1, \infty}(0,L; L^1_{loc}(0,\infty))$.
Our strategy is inspired by sideways energy estimates which are made possible by the fact that the velocities are bounded from below. These types of energy estimates consist in exchanging the role played by the time and space variables. 
\par
The main result of this section is the following one.
\begin{lemma} \label{Lemma-Lips-Boundary}
Under the assumptions of Lemma \ref{Lem-Lyapunov}, the approximate solution $u_h$ constructed in Section \ref{Sec-Construction} satisfies the following property: for all $T>0$, there exists $C_{T, \varepsilon_0}$ with
\begin{equation} \label{Est-Ortho}		
\sup_{x \in [0,L]} TV(u_h(\cdot, x)_{|t \in (0,T)}) \leq C_{T, \varepsilon_0}.
\end{equation}
\end{lemma}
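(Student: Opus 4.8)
The plan is to reinterpret the time-variation of $u_h$ at a fixed abscissa as a sum of front strengths, and to control it by a ``sideways'' Glimm functional in which the roles of $t$ and $x$ are exchanged. First I would use the uniform bounds $c_* \le x_i'(t) \le M$ on the front velocities (see \eqref{PositiveVelocities-c} and \eqref{DefM}): every straight front is a graph $x = x(t)$ with positive slope, hence it crosses a given vertical line $\{x = x_0\}$ transversally and at most once, and by Lemma \ref{Lemma-T-Infinity} there are finitely many such crossings on $(0,T)$. Therefore $t \mapsto u_h(t, x_0)$ is piecewise constant with jumps located exactly at the crossing times, each jump having magnitude comparable to the strength $|\sigma|$ of the crossing front (with a constant depending only on the $C^2$ parametrization of the Lax curves). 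Consequently
\[
TV\big(u_h(\cdot, x_0)_{|(0,T)}\big) \le C \sum_{\alpha \text{ crossing } \{x_0\}\times(0,T)} |\sigma_\alpha|,
\]
and it suffices to bound the right-hand side uniformly in $x_0 \in [0,L]$.

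Next I would introduce, for $x \in [0,L]$, the sideways functionals
\[
\mathcal{V}(x) = \sum_{\alpha \text{ crossing } \{x\}\times(0,T)} |\sigma_\alpha|, \qquad \mathcal{Q}(x) = \sum_{(\alpha,\beta)\text{ approaching}} |\sigma_\alpha|\,|\sigma_\beta|,
\]
where $\mathcal{Q}$ runs over pairs of fronts, both crossing $\{x\}\times(0,T)$, that are bound to interact at some abscissa larger than $x$, the relation being defined as in \eqref{Quadratic-Functional} after interchanging the time and space orderings. As $x$ increases past the abscissa $\bar x$ of an interior interaction point $(\bar t,\bar x)$, the fronts crossing $\{\bar x - 0\}$ are the incoming fronts of that interaction while those crossing $\{\bar x + 0\}$ are the outgoing ones; Glimm's estimates (Lemma \ref{Lemma-Interaction}) then bound the variation of $\mathcal V$ by the product of incoming strengths, which is absorbed by the drop of $\mathcal Q$, exactly as in the proof of Lemma \ref{Lem-Lyapunov}. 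This makes $\mathcal V(x) + c_0 \mathcal Q(x)$ essentially non-increasing in $x$, up to two bookkeeping contributions treated separately: the fronts issued from the initial datum, which enter the count as $x$ passes their birth abscissa $p$ (their total strength being $\le V(u_{0,h})\le\varepsilon_0$, they can be absorbed by tracking the monotone term $\sum_{p>x}|\sigma_{\mathrm{init},p}|$, so that $\widehat{\mathcal V}(x) := \mathcal V(x)+\sum_{p>x}|\sigma_{\mathrm{init},p}|$ is genuinely controlled), and the fronts reflected at the left boundary.

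The remaining point, which is where the dissipativity of the boundary is used, is the control of the starting value $\mathcal V(0^+)$, i.e. the total strength of all fronts emitted from $\{x=0\}$ on $(0,T)$. Such a front is produced either by the initial boundary Riemann problem at $t=0$, whose emitted strength is $\le C\,TV^*_{[0,L]}(u_{0,h})\le C\varepsilon_0$, or when a wave of strength $|\hat\sigma|$ hits $x=L$. In the latter case \eqref{interaction1} (a consequence of the feedback condition \eqref{Feedback-Condition}) bounds the emitted strength by $|\hat\sigma|$, while \eqref{Estimate-Case3} shows each such hit decreases $J = V + c_0 Q$ by at least $\tfrac{\varepsilon}{2}|\hat\sigma|$ for $\varepsilon_0$ small. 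Since $J\ge 0$ and $J(0)\le C\varepsilon_0$ by Lemma \ref{Lem-Lyapunov}, summing over all boundary hits gives $\sum_{\mathrm{hits}}|\hat\sigma| \le \tfrac{2}{\varepsilon}J(0)\le C\varepsilon_0$, a bound notably independent of $T$. Hence $\widehat{\mathcal V}(0^+)\le C\varepsilon_0$, and therefore $\mathcal V(x)\le\widehat{\mathcal V}(x)\le C\varepsilon_0$ for every $x$, which yields \eqref{Est-Ortho} with a constant $C_{T,\varepsilon_0}$ that may in fact be taken uniform in $T$.

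The main obstacle I anticipate is the rigorous construction of the sideways potential $\mathcal Q$ and the proof that $\mathcal V + c_0\mathcal Q$ decreases across interactions once the $t/x$ roles are swapped: one must verify that the pair of fronts meeting at $(\bar t,\bar x)$ genuinely ceases to be ``approaching'' as $x$ crosses $\bar x$, so that its term $|\hat\sigma_1||\hat\sigma_2|$ leaves $\mathcal Q$ and dominates the increase of $\mathcal V$ allowed by Lemma \ref{Lemma-Interaction}. The finite-$T$ restriction and the accounting of fronts entering or leaving the strip $(0,L)\times(0,T)$ near $t=0$ and $t=T$ must then be handled with the same care as in Lemma \ref{Lemma-T-Infinity}.
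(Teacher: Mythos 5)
Your proposal follows the same skeleton as the paper's proof --- a sideways Glimm functional in which the roles of $t$ and $x$ are exchanged, with the transversality $c_* \le \lambda \le M$ guaranteeing that $TV(u_h(\cdot,x_0)_{|(0,T)})$ is comparable to the total strength of the fronts crossing $\{x_0\}\times(0,T)$ --- but you control the crucial boundary term by a genuinely different and in fact sharper argument. The paper iterates over time slabs of length $L/M$: it first bounds $V_\perp(L,L/M)$ by $e^{2\gamma L}J(0)$, transfers this to $V_\perp(0,L/M)$ through the feedback, and then propagates via the iteration formula \eqref{iteration-formula-X}, so the resulting constant accumulates a contribution $e^{2\gamma L}(V(0)+c_0Q(0))$ at each step and therefore grows with $T$. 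You instead observe that, by \eqref{Estimate-Case3}, every reflection at $x=L$ costs the Lyapunov functional $J=V+c_0Q$ a definite amount $\tfrac{\varepsilon}{2}|\hat\sigma|$ once $c_0V(t-)\le\varepsilon/2$, so that telescoping against $J(0)\ge J\ge 0$ bounds the total reflected strength over \emph{all} time by $\tfrac{2}{\varepsilon}J(0)$; combined with \eqref{interaction1} this bounds the total strength emitted from $x=0$ independently of $T$, and hence yields \eqref{Est-Ortho} with a $T$-uniform constant, a strengthening of the statement as written. Two points need attention to make this airtight. First, the definite decrease $-\tfrac{\varepsilon}{2}|\hat\sigma|$ requires $c_0 V(t-)\le \varepsilon/2$ rather than the weaker condition $V(t-)\le\varepsilon$ used in the proof of Lemma \ref{Lem-Lyapunov}; this is obtained by shrinking $\varepsilon_0$ so that $J(0)\le\varepsilon/(2c_0)$, which is harmless. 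Second, when an initial-datum front with birth abscissa $p$ enters the count as $x$ crosses $p$, you compensate $\mathcal V$ by the monotone tail $\sum_{p>x}|\sigma_{\mathrm{init},p}|$, but the potential $\mathcal Q$ also jumps up by roughly $|\sigma_{\mathrm{init},p}|\,\mathcal V(p)$; you should augment $\mathcal Q$ with the cross-terms between crossed and not-yet-crossed fronts (this is what the paper's combination $\tilde V+c_0\tilde Q+V_\perp$ achieves), after which the monotonicity across interior interactions follows from Lemma \ref{Lemma-Interaction} exactly as you describe --- including the check, which you rightly single out, that a pair leaves the sideways ``approaching'' relation precisely when its interaction abscissa is passed (for distinct families this means the earlier-crossing front is of family $1$ and the later-crossing one of family $2$).
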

\begin{proof}
Our main argument here consists in considering the front-tracking approximations $u_{h}$ with $x$ as a time variable, which is made simple by \eqref{PositiveVel}. 
We thus introduce the functionals
\begin{equation}
V_\perp(x,T) = \sum_{\hbox{\scriptsize fronts at } (x,t) \hbox{\scriptsize\, with } t < T} |\sigma|,
\end{equation}
and
\begin{equation}
Q_\perp(x,T) = \sum_{\hbox{\scriptsize fronts } (x,t_i) \hbox{\scriptsize\, with } t_i < T} |\sigma_i| \Bigg(\sum_{\substack{\hbox{\scriptsize fronts at } (x,t_j) \hbox{\scriptsize\, with } t_j < T\\ (x,t_j) \hbox{\scriptsize\, approaching } (x, t_i)} } |\sigma_j| \Bigg),
\end{equation}
where here a front at $(x,t_j)$ with strength $\sigma_j$ and family $k_j$ is said to be approaching the front at $(x, t_i)$ with strength $\sigma_i$ and family $k_i$ if
\begin{itemize}
	\item $t_j < t_i$, $k_i = k_j$ and $\sigma_i$ or $\sigma_j$ is negative,
	\item $t_j < t_i$ and $(k_i , k_j) = (1,2)$.
\end{itemize}
Using the functionals in \eqref{Linear-Functional-tilde}--\eqref{Quadratic-Functional-tilde}, one can check that the map
$$
	x \mapsto \tilde V(u_h(x/M), x) + c_0 \tilde Q(u_h(x/M),x) + V_\perp(L,x/M)
$$
decays. This implies in particular that 
$$
	V_\perp(L,L/M) \leq \tilde V(u_{0,h},0) + c_0 \tilde Q(u_{0,h},0) \leq e^{2 \gamma L} J(0).
$$
Thanks to the conditions \eqref{Assumption-On-K},
\begin{equation} \label{Initiation-X}		
	V_\perp(0,L/M) \leq V_\perp(L,L/M) \leq e^{2 \gamma L} J(0).
\end{equation}
Now, assume we have a bound on $V_\perp(0,t_0)$ for some $t_0 >0$. Then we claim that we can have the estimate
\begin{multline} \label{iteration-formula-X}		
\sup_{x \in [0,L]} \{ V_\perp(x, t_0 + x/M) + c_0 Q_\perp(x, t_0 + x/M) \} \\
\leq (V_\perp(0,t_0) + c_0 (V_\perp(0,t_0))^2) + e^{2\gamma L} \left(V(0) + c_0 Q(0)\right).
\end{multline}
In order to show this, it is enough to consider the evolution of
$$
	x \mapsto V_\perp(x, t_0 + x/M) + c_0 Q_\perp(x, t_0 + x/M).
$$
Indeed, according to Glimm's interaction estimate in Lemma~\ref{Lemma-Interaction}, if two fronts meet in $(0,t_0 +x/M) \times \{ x\}$, this functional decays. Fronts may also arise at $t=0$ from the boundary $(0,x)$, in which case it is measured in $J(0)$. Finally, fronts can leave the domain $(0, t_0 + x/M) \times \{x\}$ from $t = t_0 + x/M$ (remember $M$ is given by \eqref{DefM}). \par
Using \eqref{Initiation-X} and iterating \eqref{iteration-formula-X}, we obtain that for $V(0) \leq \varepsilon_2$, for all $T>0$, there exists $C_{T, \varepsilon_0}$ with
$$
	\sup_{x} V_\perp(x, T) \leq C_{T, \varepsilon_0}.
$$
This concludes the proof of Lemma \ref{Lemma-Lips-Boundary} as $V_\perp(x, T)$ is equivalent to the $TV$-semi norm of $u_h(\cdot, x)$.
\end{proof}
%
%
%
%
%
%
%
%
\subsection{Passing to the limit: end of the proof of Theorem \ref{Thm-Main}}\label{Subsec-Limit}

Let $u_0 \in BV(0,L)$, and consider a sequence of approximations $u_{0,h}$ converging to $u_0$  strongly in $L^1(0,L)$ and a.e. in $(0,L)$ as $h \to 0$ with $TV^*_{[0,L]} (u_{0,h})\leq 2 TV^*_{[0,L]} (u_{0})$ such that $u_{0,h}$ is piecewise constant, and consider the corresponding sequence $u_h$ constructed in Section \ref{Sec-Construction}. \par
Lemma \ref{Lem-Lyapunov} and Corollary \ref{cor-decay-bounds} show that $u_h$ is bounded in $L^\infty(0,\infty; BV(0,L))$. Besides, it is easy to check that, thanks to finite speed of propagation, there exists a constant $C>0$ such that for all $t_1> t_0 >0$,
\begin{equation} \label{Est-W-1-infty-L-1}
\norm{u_h(t_1) -u_h(t_0)}_{L^1(0,L)} \leq C |t_1 -t_0|\max_{t \in [t_0,t_1]} TV^*_{[0,L]}(u_h(t)) \leq C |t_1 -t_0| TV^*_{[0,L]}(u_0) .
\end{equation}
Therefore, one can use Helly's theorem and a diagonal extraction argument (see \cite[Theorem 2.4]{Bressan-Book-2000}) and obtain a limit function $u \in L^1_{loc}(0, \infty; BV(0,L))$ such that, up to a subsequence still denoted in the same way for simplicity, $u_h$ strongly converges as $h\to 0$ to $u$ in $L^1_{loc}((0,\infty)\times (0,L))$. Besides, $u \in L^\infty(0,\infty; BV(0,L))\cap W^{1, \infty}(0,\infty; L^1(0,L))$ and satisfies
\begin{equation} \nonumber
\norm{u(t_1) -u(t_0)}_{L^1(0,L)} \leq C |t_1 -t_0| TV^*_{[0,L]}(u_0), \qquad t_1 >t_0 >0.
\end{equation}
In particular, since $u_h(0) = u_{0,h}$ strongly converges to $u_0$ in $L^1(0,L)$, we immediately get $u(0) = u_0$. \par
Furthermore, using the semi-continuity of the $TV^*_{[0,L]}$ norm and passing to the limit in \eqref{Decay-BV-norm-h}, we obtain, for all $t \geq 0$,
\begin{equation}
	\label{Decay-BV-norm}
	TV^*_{[0,L]}(u(t)) \leq C e^{- \nu t} TV^*_{[0,L]}(u_0), 
\end{equation}
which proves \eqref{Exp-Stabilization-Estimate}.
\par
To derive that $u$ necessarily is a weak entropy solution of \eqref{System-U} in $(0,\infty) \times (0,L)$, we argue as in \cite[Section 7.4]{Bressan-Book-2000}. That step mainly consists in measuring the errors done in approximating rarefaction waves by rarefaction fronts. This relies on the estimate \eqref{Smallness-RS} on the size of the rarefaction fronts. Details of the proof are left to the reader.\par
It remains to prove that the boundary conditions \eqref{BoundaryConditions} are satisfied. According to Lemma \ref{Lemma-Lips-Boundary}, for all $T>0$, the approximate solutions $u_h$ are uniformly bounded in $L^\infty(0,L; BV(0,T))$. As the velocities are strictly positive, it follows, similarly as in \eqref{Est-W-1-infty-L-1}, that for all $T>0$, $u_h$ are uniformly bounded in $W^{1, \infty}(0,L; L^1(0,T))$. Accordingly, using again \cite[Theorem 2.4]{Bressan-Book-2000}, $u$ satisfies the boundary conditions \eqref{BoundaryConditions} almost everywhere.

\bibliographystyle{plain}

\begin{thebibliography}{}

\end{thebibliography}


\begin{thebibliography}{1}

\bibitem{Amadori} Debora Amadori.
\newblock Initial-boundary value problems for nonlinear systems of conservation laws,
\newblock {\em NoDEA Nonlinear Differential Equations Appl.} 4 (1):1--42, 1997.
%
\bibitem{Amadori-Colombo-1997} Debora Amadori and Rinaldo M. Colombo.
\newblock Continuous dependence for $2 \times 2$ conservation laws with boundary.
\newblock {\em J. Differential Equations} 138:229--266, 1997.

\bibitem{Amadori-Colombo-1998} Debora Amadori and Rinaldo M. Colombo.
\newblock Viscosity solutions and Standard Riemann Semigroup for conservation laws with boundary.
\newblock {\em Rend. Sem. Mat. Univ. Padova} 99:219--245, 1998.

\bibitem{Ancona-Marson-2007} 
Fabio Ancona and Andrea Marson.
\newblock Asymptotic stabilization of systems of conservation laws by controls acting at a single boundary point. 
{\it Control methods in PDE-dynamical systems}, Contemp. Math. 426, 1--43, Amer. Math. Soc., Providence, RI, 2007.

\bibitem{Bressan-Book-2000}
Alberto Bressan.
\newblock {\em Hyperbolic systems of conservation laws}, volume~20 of {\em
  Oxford Lecture Series in Mathematics and its Applications}.
\newblock Oxford University Press, Oxford, 2000.
\newblock The one-dimensional Cauchy problem.


\bibitem{Bressan-Coclite}
Alberto Bressan and Giueseppe Maria Coclite.
\newblock On the boundary control of systems of conservation laws.
\newblock {\em SIAM J. Control Optim.} 41 (2):607--622, 2002.

\bibitem{Colombo-Guerra-2010} Rinaldo M. Colombo and Graziano Guerra.
\newblock On General Balance Laws with Boundary.
\newblock {\em J. Differential Equations} 248 (5):1017--1043, 2010.

\bibitem{Coron-Bastin-2014}
Jean-Michel Coron and Georges Bastin.
\newblock Dissipative boundary conditions for one-dimensional quasi-linear
  hyperbolic systems: {L}yapunov stability for the {$C^1$}-norm.
\newblock {\em SIAM J. Control Optim.}, 53(3):1464--1483, 2015.

\bibitem{Coron-Bastin-Andrea-SICON-2008}
Jean-Michel Coron, Georges Bastin, and Brigitte d'Andr\'{e}a Novel.
\newblock Dissipative boundary conditions for one-dimensional nonlinear
  hyperbolic systems.
\newblock {\em SIAM J. Control Optim.}, 47(3):1460--1498, 2008.

\bibitem{Coron-Andrea-Bastin-IEEE-2007}
Jean-Michel Coron, Brigitte d'Andr\'{e}a Novel, and Georges Bastin.
\newblock A strict {L}yapunov function for boundary control of hyperbolic
  systems of conservation laws.
\newblock {\em IEEE Trans. Automat. Control}, 52(1):2--11, 2007.

\bibitem{Coron-Nguyen-2014}
Jean-Michel Coron and Hoai-Minh Nguyen.
\newblock Dissipative boundary conditions for nonlinear 1-{D} hyperbolic
  systems: sharp conditions through an approach via time-delay systems.
\newblock {\em SIAM J. Math. Anal.}, 47(3):2220--2240, 2015.

\bibitem{Dafermos} Constantine M. Dafermos.
\newblock {\it Hyperbolic conservation laws in continuum physics.}
Grundlehren Math. Wissenschaften Series, Vol. {325}, Springer Verlag, Third Edition, 2010.

\bibitem{Dafermos:FT} Constantine M. Dafermos.
\newblock Polygonal approximations of solutions of the initial value problem for a conservation law.
\newblock {\em J. Math. Anal. Appl.} 38:33--41, 1972.

\bibitem{DiPerna-1976}
Ronald J. DiPerna.
\newblock Global existence of solutions to nonlinear hyperbolic systems of conservation laws.
\newblock {\em J. Differential Equations}, 20(1):187--212, 1976.

\bibitem{Donadello-Marson-2007}
Carlotta Donadello and Andrea Marson.
\newblock Stability of front tracking solutions to the initial and boundary value problem for systems of conservation laws.
\newblock {\em NoDEA Nonlinear Differential Equations Appl.} 14:569--592, 2007.

\bibitem{Glimm65}
James Glimm.
\newblock Solutions in the large for nonlinear hyperbolic systems of equations.
\newblock {\em Comm. Pure Appl. Math.}, 18:697--715, 1965.

\bibitem{HaleVerduynLunel-book}
Jack~K. Hale and Sjoerd~M. Verduyn~Lunel.
\newblock {\em Introduction to functional-differential equations}, volume~99 of
  {\em Applied Mathematical Sciences}.
\newblock Springer-Verlag, New York, 1993.

\bibitem{2003-De-Halleux-et-al-Automatica}
Jonathan de~Halleux, Christophe Prieur, Jean-Michel Coron,
  Brigitte d'Andrea Novel, and
  Georges Bastin.
\newblock Boundary feedback control in networks of open channels.
\newblock {\em Automatica J. IFAC}, 39(8):1365--1376, 2003.

\bibitem{Lax}
Peter D. Lax.
\newblock Hyperbolic Systems of Conservation Laws. 
\newblock {\em Comm. Pure Appl. Math.} 10:537--566, 1957.

\bibitem{Li-book}
Ta-Tsien Li.
\newblock {\em Global classical solutions for quasilinear hyperbolic systems},
  volume~32 of {\em RAM: Research in Applied Mathematics}.
\newblock Masson, Paris, 1994.

\bibitem{LiRao}
Ta-Tsien Li and Bopeng Rao.
\newblock Exact boundary controllability for quasi-linear hyperbolic systems.
\newblock {\em SIAM J. Control Optim.}, 41(6):1748--1755 (electronic), 2003.

\bibitem{Perrollaz-2013}
Vincent~Perrollaz.
\newblock Asymptotic stabilization of entropy solutions to scalar conservation
  laws through a stationary feedback law.
\newblock {\em Ann. Inst. H. Poincar\'e Anal. Non Lin\'eaire}, 30(5):879--915,
  2013.

\bibitem{1985-Qin-Tie-hu}
Tie~Hu Qin.
\newblock Global smooth solutions of dissipative boundary value problems for
  first order quasilinear hyperbolic systems.
\newblock {\em Chinese Ann. Math. Ser. B}, 6(3):289--298, 1985.
\newblock A Chinese summary appears in Chinese Ann.\ Math.\ Ser.\ A {\bf 6}
  (1985), no.\ 4, 514.

\bibitem{1986-Zhao-Yan-chun}
Yan~Chun Zhao.
\newblock Classical solutions for quasilinear hyperbolic systems.
\newblock {\em Thesis, Fudan University}, 1986.
\newblock In Chinese.

\end{thebibliography}

\end{document}